\def\tldC{{\tld{C}}}
\def\Lns{L_n^{(\mathbf{s})}}
\def\phi{\varphi}       
\def\tld{\widetilde}     
\def\row{\rightarrow}
\def\lam{\lambda}
\newcommand{\la}{\lambda}
\newcommand{\floor}[1]{\ensuremath{\left\lfloor #1 \right\rfloor}}
\newcommand{\ang}[1]{\ensuremath{\left\langle #1 \right\rangle}}
\newcommand{\ceil}[1]{\ensuremath{\left\lceil #1 \right\rceil}}
\newcommand{\abs}[1]{\ensuremath{\left\lvert #1 \right\rvert}}
\newcommand{\qbinom}[2]{\genfrac{[}{]}{0pt}{}{#1}{#2}}
\def\inv{\mathsf{inv}}
\def\neg{\mathsf{neg}}
\renewcommand\max{\mathsf{max}}
\def\lhp{\mathsf{lhp}}
\def\comaj{\mathsf{comaj}}
\def\amaj{\mathsf{amaj}}
\def\ades{\mathsf{ades}}
\def\Asc{\mathsf{Asc}}
\def\asc{\mathsf{asc}}
\def\Des{\mathsf{Des}}
\def\des{\mathsf{des}}
\def\inv{\mathsf{inv}}
\def\nsp{\mathsf{nsp}}
\def\last{\mathsf{last}}
\newcommand\bbZ{\mathbb{Z}}
\newcommand{\integers}{\mathbb{Z}}
\newcommand{\I}{\mathbf{I}}
\newcommand\s{\mathbf{s}}
\newcommand\id{\mathrm{id}}
\theoremstyle{plain}
\newtheorem{theorem}{Theorem}[section]
\newtheorem{lemma}[theorem]{Lemma}
\newtheorem{corollary}[theorem]{Corollary}
\newtheorem{observation}[theorem]{Observation}
\theoremstyle{definition}
\newtheorem{example}[theorem]{Example}
\theoremstyle{remark}
\numberwithin{equation}{section}
\title[Lecture hall partitions and the affine hyperoctahedral group]{Lecture hall partitions and\\ the affine hyperoctahedral group}
\author{Christopher R.\ H.\ Hanusa}
\address{Department of Mathematics \\ Queens College (CUNY) \\ 65-30 Kissena Blvd. \\ Flushing, NY 11367\\ United States}
\email{\href{mailto:chanusa@qc.cuny.edu}{\texttt{chanusa@qc.cuny.edu}}}
\urladdr{\url{http://qc.edu/~chanusa/}}
\author{Carla D.\ Savage}
\address{Department of Computer Science \\ North Carolina State University \\ Raleigh, NC 27695-8206 \\ United States}
\email{\href{mailto:savage@ncsu.edu}{\texttt{savage@ncsu.edu}}}
\urladdr{\url{http://www4.ncsu.edu/~savage/}}
\subjclass[2010]{05A15, 05A17, 05A30, 05E15, 20F55}  
\keywords{hyperoctahedral group, affine hyperoctahedral group, lecture hall partition, $\s$-lecture hall partition, signed permutations, truncated lecture hall partitions, inversions, descent set, quadratic statistic, Coxeter groups, Type C, Bott's formula, inv, amaj, lhp, comaj}
\begin{document}

\thispagestyle{empty}

\begin{abstract}
In 1997 Bousquet-M\'elou and Eriksson introduced lecture hall partitions as the inversion vectors of elements of the parabolic quotient $\widetilde{C}/C$.  We provide a new view of their correspondence that allows results in one domain to be translated into the other.  We determine the equivalence between combinatorial statistics in each domain and use this correspondence to translate certain generating function formulas on lecture hall partitions to new observations about $\widetilde{C}/C$.
\end{abstract}

\maketitle
\thispagestyle{empty}

\vspace{-.3in}

\section{Introduction}
In 1997  Bousquet-M\'elou  and Eriksson  \cite{BME}  introduced lecture hall partitions by showing that they are the inversion vectors of elements of the parabolic quotient $\tldC/C$. We provide a new way to understand their correspondence which allows results in one domain to be translated into the other.  In Section~\ref{sec:proof} we distill the essence of the new proof into a correspondence between a $(2,4,\hdots,2n)$-inversion sequence (the excess of the lecture hall partition) and a signed permutation (the residues modulo $2n+2$ of the window of the element of the parabolic quotient).  Indeed, a key insight is that the set of lecture hall partitions $L_n$ is the same as the set of generalized \mbox{$\s$-lecture} hall partitions for $\s=(2,4,\hdots,2n)$; this appears to be a more natural environment for understanding the correspondence with the affine Coxeter group $\tldC_n$.   

The structure of this paper is as follows.  In this section we introduce just enough notation and background to state the result of Bousquet-M\'elou  and Eriksson.  In Section~\ref{sec:proof} we give our new proof of their result. In Section~\ref{sec:stats}, we show how this new correspondence allows us to find the equivalence of combinatorial statistics in the two regimes.  This leads to Section~\ref{sec:results} in which we present new proofs and novel results in $\tldC/C$ corresponding to truncated lecture hall partitions, as well as a product-form  generating function for a quadratic statistic $\lhp_C$ on $C_n$.  Bj\"orner and Brenti  \cite{BB} is our reference on Coxeter groups.

\subsection{The hyperoctahedral group}\

\noindent
The {\em hyperoctahedral group} $C_n$ (or $B_n$) is the (finite) group of symmetries of the $n$-cube, which can be defined as a reflection group with generators $\{s_0,\hdots,s_{n-1}\}$ and the relations $s_i^2=\id$ for $0\leq i\leq n-1$, $(s_is_{i+1})^3=\id$ for $1\leq i\leq n-2$, and $(s_0s_1)^4=\id$; all other pairs of generators commute.  $C_n$ is also seen as the set of permutations $\sigma$ of \[\{-n,-(n-1),\hdots,n-1,n\}\] satisfying 
$\sigma(-i)=-\sigma(i)$ for $1\leq i\leq n$.  These permutations are completely defined by the values of the {\em window} $[\sigma(1), \hdots, \sigma(n)]$, which is a permutation of $\{\pm 1,\hdots,\pm n\}$ in which exactly one of $+i$ or $-i$ appears for $1\leq i\leq n$. In this way $C_n$ is realized as the set of {\em signed permutations}.

\subsection{The affine hyperoctahedral group}\label{sec:tldC}\

\noindent
The {\em affine hyperoctahedral group} $\tldC_n$ is an (infinite) reflection group that includes all the generators and relations of $C_n$ and along with one additional generator, $s_n$, satisfying $s_n^2=\id$ and one more non-commuting relation, $(s_{n-1}s_n)^4=\id$.  Elements $w$ of $\tldC_n$ can be written as permutations of $\bbZ$ satisfying 
\begin{equation}\label{eq:transC1}
w_{i+(2n+2)}=w_{i}+(2n+2)
\end{equation}
 and 
\begin{equation}\label{eq:transC2}
w_{-i}=-w_i
\end{equation}
for all $i\in \bbZ$ \cite{Henrik, EE}.  These conditions imply $w(i)=i$ for all $i\equiv 0\bmod (n+1)$. We will write $N=2n+2$.

As above, an element $w\in\tldC_n$ is completely defined by the window $[w_1,\hdots,w_n]$.  We know that a window corresponds to an element of $\tldC_n$ if the values $\pm w_1, \hdots, \pm w_n$ are all distinct modulo $N$.

The group $C_n$ embeds as a parabolic subgroup of $\tldC_n$; as such each element $w\in \tldC_n$ has a parabolic decomposition $w=w^0w_0$ where $w^0\in\tldC_n/C_n$ and $w_0\in C_n$.  Bj\"orner and Brenti \cite[Proposition~8.4.4]{BB}, show that the entries of a window of an element ${w} \in \tldC_n/C_n$ are all positive and sorted in increasing order.   
 
\subsection{Class inversions}\

\noindent
Analogous to the concept of inversions for permutations is the idea of class inversions for $w \in \tldC_n$.
For $j \in \bbZ$, define the {\em class} $\ang{j}$ to be the set of all positions of a form $j+kN$ or $-j+kN$ for all $k\in \bbZ$.  We will keep track of the number of {\em class inversions}, which are entries in positions of the class $\ang{j}$ that are inverted with a fixed entry in position $i$.  More precisely, for $1 \leq j \leq i \leq n$ define 
\begin{equation}
\label{eq:Iij}
\begin{aligned}
I_{i,j} &=  \abs{\{k\in\bbZ_{> 0} : w_i>w_j+kN\}}+ \abs{\{k\in\bbZ_{> 0} : w_i>-w_{j}+kN\}}\\
&=\floor{\frac{w_i - w_j}{N}}  +  \floor{\frac{w_i + w_j}{N}}.\\
\end{aligned}
\end{equation}
For $1\leq i\leq n$, let $I_i = \sum_{1 \leq j \leq i} I_{i,j}$. Define the {\em class inversion vector} $I(w)$ of $w$ as
\[
I(w) = [I_1, I_2, \ldots, I_n].
\]
The {\em type-C affine inversion number} of $w \in \tldC_n$ is given by
\[
\inv_{\tldC}(w) = \sum_{i=1}^n I_i,
\]
which is equal to the Coxeter length $\ell(w)$ of $w$ \cite[Proposition~8.4.1]{BB}.

\subsection{Lecture hall partitions and class inversion vectors}\

\noindent
Lecture hall partitions, introduced by Bousquet-M\'elou and Eriksson in \cite{BME},  arose naturally in the study of  inversions in  $\tldC_n/C_n$.

The set $L_n$ of {\em lecture hall partitions} of length $n$ is the set of integer sequences
\[
L_n = \left\{(\lam_1,\hdots,\lam_n) \in \integers^n: 0\leq \frac{\lam_1}{1} \leq \frac{\lam_2}{2}\leq \cdots \leq \frac{\lam_n}{n}\right\}.
\]

We will focus on the following characterization of lecture hall partitions and its implications.
\begin{theorem}[Bousquet-M\'elou and Eriksson \cite{BME}]
The mapping sending 
 $w \in \tldC_n/C_n$ to its class inversion vector $[I_1, \ldots, I_n]$ is a bijection
 $$\tldC_n/C_n\row L_n.$$
 \label{CtoL}
\end{theorem}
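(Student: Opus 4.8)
The plan is to build an explicit two-sided inverse, organized around splitting each window entry into its quotient and residue modulo $N=2n+2$. For $w\in\tldC_n/C_n$ we may use that the window satisfies $0<w_1<\cdots<w_n$ and that $\pm w_1,\dots,\pm w_n$ are distinct modulo $N$ with no $w_i\equiv 0\pmod{n+1}$; consequently no floor appearing in \eqref{eq:Iij} sits on an integer boundary. Write $w_i=Nk_i+r_i$ with $k_i=\floor{w_i/N}\ge 0$ and residue $r_i$. Each $r_i$ is congruent to $\pm b_i$ for a unique $b_i\in\{1,\dots,n\}$, and the $b_i$ form a permutation; together with the signs this packages the residues into a signed permutation $\pi\in C_n$, while the quotients $k_i$ record an integer-partition part.

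First I would extract a usable closed form for $I_i$. Applying $\floor{x}=x-\{x\}$ in \eqref{eq:Iij}, the linear parts telescope (the $w_j$ cancel) to give
\[
I_i=\frac{i\,w_i}{n+1}-\sum_{j=1}^{i}\left(\left\{\tfrac{w_i-w_j}{N}\right\}+\left\{\tfrac{w_i+w_j}{N}\right\}\right).
\]
A short case analysis shows the bracketed sum equals $2\{w_i/N\}$ plus a correction in $\{-1,0,+1\}$ governed solely by the order of the residues $r_i,r_j$ and by whether $r_i+r_j$ exceeds $N$. Substituting $w_i=Nk_i+r_i$ collapses this to $I_i=2ik_i+S_i$, where $S_i$ depends only on $\pi$; in fact one checks that $S_i$ ranges over the complete residue system $\{-(i-1),\dots,i\}$ modulo $2i$. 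This cleanly separates the \emph{partition part} $2ik_i$ from the \emph{residue part} $S_i$, and it immediately powers injectivity and surjectivity: from $I_i$ one reads $S_i$ as its residue modulo $2i$, recovering $\pi$, and then $k_i=(I_i-S_i)/(2i)$ recovers the quotients and hence the window.

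It remains to match the constraints, and this is where the real work lies. For the forward direction ($I(w)\in L_n$) the inequality $I_i/i\le I_{i+1}/(i+1)$ becomes $2k_i+S_i/i\le 2k_{i+1}+S_{i+1}/(i+1)$. Since $w_i<w_{i+1}$ forces $k_i\le k_{i+1}$, the case of a strict jump $k_{i+1}>k_i$ is handled by the crude bounds $S_i/i\le 1$ and $S_{i+1}/(i+1)>-1$. The delicate case is equal consecutive quotients, where $r_i<r_{i+1}$ and the inequality reduces to the purely residue-theoretic claim $(i+1)S_i\le iS_{i+1}$; verifying this from the case analysis underlying the closed form is the main obstacle. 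Surjectivity is the same correspondence read backwards: given $\lam\in L_n$, one peels off its excess---a $(2,4,\dots,2n)$-inversion sequence---to read off $\pi$ and the remaining quotients, assembles a positive increasing window, and checks it is a valid element of $\tldC_n/C_n$. The conceptual point that the coupled inequality makes precise is that a decrease in the residue part $S$ can only occur alongside an increase in the partition part $k$; this interlocking is exactly why lecture hall partitions, and not ordinary partitions, are the image.
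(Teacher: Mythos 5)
Your overall architecture is the same as the paper's: you split each window entry into its quotient and residue modulo $N$, package the residues as a signed permutation, derive a closed form $I_i = 2ik_i + S_i$ in which the quotient contributes linearly and the remainder depends only on the signed permutation (this is the paper's $I_i = 2ic_i - e_i$ with $e = \Psi(\sigma)$ a $(2,4,\ldots,2n)$-inversion sequence), and then recover $w$ from $I(w)$ by reading $S_i$ off as a residue modulo $2i$. The crude-bound argument for the case $k_{i+1}>k_i$ is fine, and the separation of a ``partition part'' from a ``residue part'' is exactly the paper's pair of characterizations of $L_n$ and of $\tldC_n/C_n$ as sets of compatible pairs.

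However, there is a genuine gap, and it sits precisely where you say ``the main obstacle'' lies: the equal-quotient case. You reduce the lecture hall inequality to the claim that $r_i<r_{i+1}$ implies $(i+1)S_i\le iS_{i+1}$, but you never establish this claim; you only remark that verifying it ``is the main obstacle.'' In the paper's language this is one direction of the identity $\Des(\sigma)=\Asc^{(2,4,\ldots,2n)}(\Psi(\sigma))$, which the paper imports from \cite{SV} (together with the characterization of $L_n^{(\s)}$ from \cite{SS}); it is the combinatorial heart of the theorem and cannot be waved through. Moreover, you need the \emph{converse} implication as well, and your proposal does not acknowledge this: for surjectivity you take $\la\in L_n$, peel off its excess $e$, invert to get $\sigma$ and the quotients, and then must check that the assembled window is increasing. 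When $b_i=b_{i+1}$ this requires $\sigma_i<\sigma_{i+1}$, i.e.\ that $i\notin\Asc^{(\s)}(e)$ forces $i\notin\Des(\sigma)$ --- the reverse inclusion of the same descent/ascent correspondence. Without both directions of that lemma (or a direct verification of the two residue-theoretic inequalities from your case analysis), neither the containment $I(w)\in L_n$ nor the well-definedness of the inverse map is proved. A smaller unproved assertion in the same vein is that $\pi\mapsto(S_1,\ldots,S_n)$ is a bijection onto the full product of ranges; this is the signed analogue of the Lehmer code and is routine, but it too should be stated and checked rather than asserted.
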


In the next section we provide a new view of this bijection that allows results in one domain to be translated into the other.

\newpage
\section{A new view of the bijection}\label{sec:proof}

In this section, we will give a proof of Theorem \ref{CtoL} with the following overview.

\begin{itemize}
   
\item Show that the elements $\la \in L_n$ can be characterized as pairs in this set:
\[
   T_n^{(\s)} = \left \{(b,e) \in \integers^n \times   \I_n^{(\s)} :  {\rm for}  \ i \in \{0, \ldots, n-1\}, \  b_i \leq b_{i+1} \ {\rm and} \ i \in \Asc^{(\s)}(e) \implies b_i < b_{i+1}  \right \},
\]
for the sequence $\s=(2,4,\ldots, 2n)$, in such a way that 
$$\abs{\la} = \sum_{i=1}^n 2ib_i- \abs{e}.$$
Inversion sequences  $\I_n^{(\s)}$ and the ascent statistic $\Asc^{(\s)}$ are defined in Section \ref{genlhp}.
 
\item Show that elements $w \in \tldC_n/C_n$ can be characterized as pairs in this set:
\[
   U_n = \left \{(c,\sigma) \in \integers^n \times  C_n : {\rm for}  \ i \in \{0, \ldots, n-1\}, \  c_i \leq c_{i+1} \ {\rm and} \ 	i \in \Des(\sigma) \implies c_i < c_{i+1}  \right \}.
\]
   in such a way that $$\inv_{\tldC}(w)= \sum_{i=1}^n 2ic_i- \inv_C(\sigma).$$
   
\item Show  a bijection $\Psi:  C_n \rightarrow \I_n^{(\s)}$ with the property that for $\sigma \in C_n$,
\[
\Des(\sigma) = \Asc^{(\s)}(\Psi(\sigma))\ \  \ {\rm  and} \ \ \  \inv_C(\sigma) = \abs{\Psi(\sigma))}=\abs{e}.
\]
   
\item  Conclude that the following action sends $w \in \tldC_n/C_n$ to its inversion vector $I(w)=\la$:
\[
   w \longleftrightarrow (c, \sigma) \longleftrightarrow (c,\Psi(\sigma)) \longleftrightarrow \la
\]
\end{itemize}

\subsection{Generalized lecture hall partitions}\label{genlhp}\

\noindent
A natural generalization of $L_n$ is to consider the set $\Lns$ of {\em $\s$-lecture hall partitions}, defined for a sequence $\s=(s_1,\hdots,s_n)$ of positive integers by
\[
\Lns = \left\{(\lam_1,\hdots,\lam_n): 0\leq \frac{\lam_1}{s_1} \leq \frac{\lam_2}{s_2}\leq \cdots \leq \frac{\lam_n}{s_n}\right\}.
\]
 In this paper we will be exploiting the fact that the sets $L_n$ and $L_n^{(2,4,\hdots,2n)}$ are equal.
 
 For $\lam \in \Lns$, define the ceiling of $\lam$ with respect to $\s$ by
 \[
 \ceil{\lam}^{(\s)}:=\left(
\Big\lceil\! \frac{\lam_1}{s_1}\!\Big\rceil, 
\Big\lceil\! \frac{\lam_2}{s_2}\!\Big\rceil, 
\hdots, 
\Big\lceil\! \frac{\lam_n}{s_n}\!\Big\rceil\right)\]
and the {\em excess} of $\la$ with respect to $\s$ by
\[
e^{(\s)}(\lam) := \left(
s_1\Big\lceil\! \frac{\lam_1}{s_1}\!\Big\rceil-\lam_1, 
s_2\Big\lceil\! \frac{\lam_2}{s_2}\!\Big\rceil-\lam_2, 
\hdots, 
s_n\Big\lceil\! \frac{\lam_n}{s_n}\!\Big\rceil-\lam_n\right).
\]
 
 \begin{observation}
 \label{lhpchar}
 For any sequence $\s$ of positive integers, all of the following are consequences of $\lam \in \Lns$.
 \begin{enumerate}[label=(\alph*)]
 \item
 $\ceil{\lam}^{(\s)}$ is a nondecreasing sequence of positive integers.
 \item
$0 \leq e_i^{(\s)}(\la) < s_i$  for $1 \leq i \leq n$.
 \item
 If $\ceil{\lam}_1^{(\s)}=0$ then $e_1^{(\s)}(\la)=0$.
 \item
 For $1 \leq i < n$,
 \[
 \text{If }\ceil{\lam}_i^{(\s)} =  \ceil{\lam}_{i+1}^{(\s)}  \text{ then } \frac{e_i^{(\s)}(\la)}{s_i} \geq  \frac{e_{i+1}^{(\s)}(\la)}{s_{i+1}}.
 \]
 \end{enumerate}
   \end{observation}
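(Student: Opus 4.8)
The plan is to reduce all four parts to a single rearrangement of the definition of the excess, namely the identity
\[
\frac{\lam_i}{s_i} = \ceil{\lam}_i^{(\s)} - \frac{e_i^{(\s)}(\la)}{s_i},
\qquad 1 \leq i \leq n,
\]
which I obtain by dividing $e_i^{(\s)}(\la) = s_i\ceil{\lam}_i^{(\s)} - \lam_i$ through by $s_i$. Once this is in hand, parts (a)--(d) follow by inspecting how this identity interacts with the defining chain $0 \leq \lam_1/s_1 \leq \cdots \leq \lam_n/s_n$ of $\Lns$.

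For part (b), I note that the claim is purely a statement about the ceiling function and requires nothing about $\lam$ being a lecture hall partition: since $\ceil{\lam}_i^{(\s)}$ is the least integer at least $\lam_i/s_i$, we have $\lam_i/s_i \leq \ceil{\lam}_i^{(\s)} < \lam_i/s_i + 1$, and multiplying through by $s_i$ gives $0 \leq e_i^{(\s)}(\la) < s_i$. For part (a), the entries $\ceil{\lam}_i^{(\s)}$ are integers as ceilings of rationals, are nonnegative because $\lam_i/s_i \geq 0$ (and are positive precisely when $\lam_i > 0$), and form a nondecreasing sequence because the ceiling function is monotone and $\lam_i/s_i \leq \lam_{i+1}/s_{i+1}$ along the chain. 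Part (c) is the boundary case: $\ceil{\lam}_1^{(\s)} = 0$ together with $\lam_1/s_1 \geq 0$ forces $\lam_1/s_1 = 0$, whence $e_1^{(\s)}(\la) = s_1 \cdot 0 - 0 = 0$.

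The only part with any content is (d), and even there the key identity does the work. Substituting the identity into $\lam_i/s_i \leq \lam_{i+1}/s_{i+1}$ yields
\[
\ceil{\lam}_i^{(\s)} - \frac{e_i^{(\s)}(\la)}{s_i} \;\leq\; \ceil{\lam}_{i+1}^{(\s)} - \frac{e_{i+1}^{(\s)}(\la)}{s_{i+1}},
\]
and under the hypothesis $\ceil{\lam}_i^{(\s)} = \ceil{\lam}_{i+1}^{(\s)}$ the ceiling terms cancel, leaving exactly $e_i^{(\s)}(\la)/s_i \geq e_{i+1}^{(\s)}(\la)/s_{i+1}$. I expect no genuine obstacle here: the whole observation is a bookkeeping translation of the single rational inequality chain into the ceiling/excess coordinates. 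The only care needed is tracking the direction of the inequality after the sign flip in (d) (moving $e_i/s_i$ to the other side reverses $\leq$ into $\geq$) and keeping the nonnegativity in (a) consistent with the degenerate case isolated in (c).
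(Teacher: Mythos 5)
Your proof is correct; the paper states this observation without any proof, treating all four parts as immediate consequences of the definitions, and your argument is precisely the routine verification the authors had in mind (rewriting $\lam_i/s_i = \ceil{\lam}_i^{(\s)} - e_i^{(\s)}(\la)/s_i$ and tracking it through the defining inequality chain). You also correctly handle the one wrinkle in the statement itself: part (a) says ``positive integers'' while part (c) explicitly allows $\ceil{\lam}_1^{(\s)}=0$, and your observation that the entries are nonnegative integers, positive exactly when $\lam_i>0$, is the right reading.
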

   
   Observation \ref{lhpchar} motivated the following definitions from \cite{SS}.
   
   For a sequence $\s=(s_1, \ldots, s_n)$ of positive integers,  the  {\em $\s$-inversion sequences} are defined by
   \[
   \I_n^{(\s)} = \{ (e_1, \ldots, e_n) \in \integers^n : 0 \leq e_i < s_i, \ 1 \leq i \leq n \}.
   \]
   For $e \in    \I_n^{(\s)}$, the {\em ascent set} of $e$ with respect to $\s$ is 
   \begin{equation}\label{eq:ascs}
   \Asc^{(\s)}(e) = \left \{ i \in \{0, \ldots, n-1\} \ :\ \frac{e_i}{s_i} < \frac{e_{i+1}}{s_{i+1}}  \right \},
   \end{equation}
   where  for the convenience of the definition we let $e_0=0$ and $s_0=1$.

   If $\lam \in \Lns$,  Observation \ref{lhpchar}(b) says that $e^{(\s)}(\lam) \in  \I_n^{(\s)}$ and 
   Observations \ref{lhpchar}(a,c,d) imply that if $i \in \Asc^{(\s)}(e)$ then  $\ceil{\lam}_i^{(\s)} <  \ceil{\lam}_{i+1}^{(\s)}$ and otherwise $\ceil{\lam}_i^{(\s)} \leq  \ceil{\lam}_{i+1}^{(\s)}$.

\begin{example}   
   The sequence $(1,2,1,7,8)$ is a $(2,4,6,8,10)$-inversion sequence and its ascent set is $\Asc^{(2,4,6,8,10)}(1,2,1,7,8)= \{0,3\}$.  We note that position 1 is not a $(2,4,6,8,10)$-ascent since $\frac{1}{2} \not < \frac{2}{4}$, while
   4 is not a $(2,4,6,8,10)$-ascent because $\frac{7}{8} \not < \frac{8}{10}$
\end{example}

It was shown in \cite{SS} that these conditions characterize $ \Lns$.
 
   \begin{lemma}[\cite{SS}]
   For any sequence $\s$ of positive integers, the mapping
   \[
   \la \longrightarrow \big(\ceil{\lam}^{(\s)}, e^{(\s)}(\lam)\big)
   \] 
   is a bijection
   \[
   \Lns \rightarrow T_n^{(\s)}
   \]
   where, under the convention that $b_0=0$,
   \[
   T_n^{(\s)} = \left \{(b,e) \in \integers^n \times   \I_n^{(\s)} :  {\rm for}  \ i \in \{0, \ldots, n-1\}, \  b_i \leq b_{i+1} \ {\rm and} \ i \in \Asc^{(\s)}(e) \implies b_i < b_{i+1}  \right \}.
   \]
   \end{lemma}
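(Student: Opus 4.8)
The plan is to prove the stated map is a bijection by exhibiting an explicit two-sided inverse, since the real content is the construction of the inverse and the verification that it lands in $\Lns$. First I would dispatch well-definedness: for $\lam \in \Lns$ one must check that $\big(\ceil{\lam}^{(\s)}, e^{(\s)}(\lam)\big)$ lies in $T_n^{(\s)}$, and this is exactly the content of Observation~\ref{lhpchar}. Part~(b) gives $e^{(\s)}(\lam) \in \I_n^{(\s)}$, while the consequence of parts~(a), (c), (d) recorded immediately after the observation states that $\ceil{\lam}_i^{(\s)} \le \ceil{\lam}_{i+1}^{(\s)}$ in general and that $\ceil{\lam}_i^{(\s)} < \ceil{\lam}_{i+1}^{(\s)}$ whenever $i \in \Asc^{(\s)}(e^{(\s)}(\lam))$; taking $b_0 = 0$, these are precisely the two defining inequalities of $T_n^{(\s)}$. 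So no new argument is needed for this direction.

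For the inverse, given $(b,e) \in T_n^{(\s)}$ I would set $\lam_i = s_i b_i - e_i$ for $1 \le i \le n$. The key elementary observation is that $0 \le e_i < s_i$ forces $\lam_i/s_i = b_i - e_i/s_i \in (b_i - 1, b_i]$, so $\ceil{\lam_i/s_i} = b_i$ and hence $e_i^{(\s)}(\lam) = s_i b_i - \lam_i = e_i$. This makes both compositions transparent: applying the inverse and then the forward map returns $(b,e)$ by the identity just noted, and applying the forward map and then the inverse to $\lam$ returns $s_i \ceil{\lam_i/s_i} - e_i^{(\s)}(\lam) = \lam_i$ directly from the definition of the excess. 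The one substantive point left is that this candidate preimage actually satisfies $\lam \in \Lns$.

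To see $\lam \in \Lns$ I would compare consecutive ratios $\lam_i/s_i = b_i - e_i/s_i$. When $i \notin \Asc^{(\s)}(e)$ both $b_{i+1} - b_i \ge 0$ and $e_i/s_i - e_{i+1}/s_{i+1} \ge 0$, so $\lam_{i+1}/s_{i+1} - \lam_i/s_i \ge 0$ is a sum of nonnegative terms. When $i \in \Asc^{(\s)}(e)$ the defining implication of $T_n^{(\s)}$ gives $b_{i+1} \ge b_i + 1$, and then $e_{i+1}/s_{i+1} < 1$ together with $e_i/s_i \ge 0$ yields $\lam_{i+1}/s_{i+1} > b_i \ge \lam_i/s_i$. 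The boundary inequality $\lam_1/s_1 \ge 0$ is the same computation under the conventions $b_0 = e_0 = 0$: if $e_1 = 0$ then $\lam_1/s_1 = b_1 \ge 0$, and if $e_1 > 0$ then $0 \in \Asc^{(\s)}(e)$ forces $b_1 \ge 1$, whence $\lam_1/s_1 > 0$.

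The main obstacle is the mild bookkeeping in the last paragraph: one has to track the ascent and non-ascent regimes separately and notice that the fractional term $e_{i+1}/s_{i+1}$ can be arbitrarily close to $1$, so only the strict jump $b_i < b_{i+1}$ supplied by an ascent is strong enough to preserve the ordering. This is exactly why the ascent condition is built into $T_n^{(\s)}$, and pinpointing that dependence is the one place where the structure of $T_n^{(\s)}$ is used in an essential way.
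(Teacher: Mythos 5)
Your proof is correct. Note that the paper does not actually prove this lemma itself --- it is cited from \cite{SS}, with only the forward direction (well-definedness of the map into $T_n^{(\s)}$) supported in the text by Observation~\ref{lhpchar} and the remark following it, exactly as you observe. Your construction of the inverse $\lam_i = s_ib_i - e_i$, the verification that $\ceil{\lam_i/s_i} = b_i$ from $0 \le e_i < s_i$, and the two-case analysis showing $\lam \in \Lns$ (nonnegative differences of ratios off ascents, and the strict jump $b_{i+1} \ge b_i + 1$ absorbing the fractional term $e_{i+1}/s_{i+1} < 1$ at ascents, including the boundary case $i=0$ under the conventions $b_0 = e_0 = 0$, $s_0 = 1$) supplies the omitted converse completely and is the standard argument; it correctly identifies the one place where the ascent condition in $T_n^{(\s)}$ is essential.
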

   
\subsection{A characterization of $\tldC_n/C_n$}\label{Cchar}\

\noindent
From Section \ref{sec:tldC}, each representative window  $w =  [w(1), \ldots, w(n)]  \in \tldC_n/C_n$ is an increasing sequence of positive integers with a unique representation
 \[
 [w(1), \ldots, w(n)] = [c_1 N + \sigma_1, c_2N + \sigma_2, \ldots, c_nN+\sigma_n]
 \]
 where $\sigma=(\sigma_1, \ldots, \sigma_n) \in C_n$.  A {\em descent} of $\sigma \in C_n$ is a position $i \in [n-1]$ such that $\sigma_i > \sigma_{i+1}$.  $\Des(\sigma)$ is the set of all descents of $\sigma$ and $\des(\sigma) = \abs{\Des(\sigma)}$.
 \begin{observation}
 \label{affCchar}
 All of the following properties are consequences of the definitions:
 \begin{enumerate}[label=(\alph*)]
 \item
 $0 \leq c_1 \leq c_2 \leq \ldots \leq c_n$.
 \item
 If $c_1=0$ then $\sigma_1 >0$.
 \item
 For $1 \leq i < n$ if $i \in \Des(\sigma)$ then $c_i < c_{i+1}$

 \end{enumerate}
   \end{observation}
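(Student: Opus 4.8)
The plan is to read off all three properties directly from the defining decomposition $w(i) = c_i N + \sigma_i$, using only two facts: that the window $[w(1), \ldots, w(n)]$ is a strictly increasing sequence of positive integers, and that each $\sigma_i$ lies in $\{-n, \ldots, -1\} \cup \{1, \ldots, n\}$. First I would pin down the decomposition itself. The residues modulo $N = 2n+2$ of the set $\{\pm 1, \ldots, \pm n\}$ are exactly $\{1, \ldots, n\} \cup \{n+2, \ldots, 2n+1\}$ (omitting $0$ and $n+1$), so each positive integer $w(i)$ has a unique representative $\sigma_i$ of $w(i) \bmod N$ in $\{-n, \ldots, n\} \setminus \{0\}$, and $c_i := (w(i) - \sigma_i)/N$ is then a well-defined integer; distinctness of the $\pm w(i)$ modulo $N$ forces $\sigma \in C_n$.

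The central estimate, which drives parts (a) and (c), is the size bound $\abs{\sigma_i - \sigma_{i+1}} \leq 2n < N$, valid because both $\sigma$-values lie in $\{-n,\ldots,n\}$. Rewriting the window comparison as
\[
(c_{i+1} - c_i) N = \big(w(i+1) - w(i)\big) + \big(\sigma_i - \sigma_{i+1}\big),
\]
I would argue for (a) that $w(i+1) - w(i) > 0$ together with $\sigma_i - \sigma_{i+1} > -N$ forces the right-hand side to exceed $-N$, so the integer $c_{i+1} - c_i$ is at least $0$, giving $c_i \leq c_{i+1}$. Nonnegativity of $c_1$ (and hence of every $c_i$, by the monotonicity just established) follows by a short case check: when $\sigma_1 > 0$ the smallest positive integer congruent to $\sigma_1$ modulo $N$ is $\sigma_1$ itself, giving $c_1 \geq 0$, while when $\sigma_1 < 0$ the smallest positive such integer is $N + \sigma_1 > 0$, giving $c_1 \geq 1$.

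Part (b) is immediate: $c_1 = 0$ forces $w(1) = \sigma_1$, and since $w(1)$ is a positive integer we conclude $\sigma_1 > 0$. For (c) I would use (a) to reduce to ruling out equality. If $c_i = c_{i+1}$, then the displayed identity gives $w(i+1) - w(i) = \sigma_{i+1} - \sigma_i$, whose left-hand side is positive; hence $\sigma_{i+1} > \sigma_i$, so $i \notin \Des(\sigma)$. Taking the contrapositive yields $c_i < c_{i+1}$ whenever $i \in \Des(\sigma)$.

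I do not anticipate a genuine obstacle here; the only point needing care is the bound $\abs{\sigma_i - \sigma_{i+1}} < N$. Conceptually, this bound is precisely what makes the ordering of the window entries coincide with the lexicographic ordering of the pairs $(c_i, \sigma_i)$, and once that equivalence is recorded, all three properties are immediate translations of ``the window is a strictly increasing sequence of positive integers.''
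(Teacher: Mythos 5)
Your proof is correct, and it is essentially the argument the authors intend: the paper states this Observation without proof, and your computation with $(c_{i+1}-c_i)N = (w_{i+1}-w_i)+(\sigma_i-\sigma_{i+1})$ together with the bound $\abs{\sigma_i-\sigma_{i+1}}\leq 2n < N$ is exactly the mirror image of the argument the paper does write out for the converse inclusion (the Lemma showing each $(c,\sigma)\in U_n$ yields a valid window). Nothing is missing; the one point needing care, strictness of $w_{i+1}>w_i$ from distinctness of residues modulo $N$, is one you address explicitly.
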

   
We show these conditions characterize $\tldC_n/C_n$.
 
   \begin{lemma}
   The mapping
   \[
   w=[w_1, \ldots, w_n]   \longrightarrow (c,\sigma),
   \] 
   where $w_i = c_iN+\sigma_i$ and $\sigma \in C_n$
   is a bijection
   \[
   \tldC_n/C_n \rightarrow U_n,
   \]
   where, under the convention that $c_0=0$,
   \[
   U_n = \left \{(c,\sigma) \in \integers^n \times  C_n : {\rm for}  \ i \in \{0, \ldots, n-1\}, \  c_i \leq c_{i+1} \ {\rm and} \ i \in \Des(\sigma) \implies c_i < c_{i+1}  \right \}.
   \]
   \end{lemma}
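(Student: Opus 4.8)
The plan is to read the map as the balanced quotient--remainder decomposition of the window entries modulo $N$: this decomposition is patently injective and reversible, so the whole content of the lemma is to pin down its image as exactly $U_n$. The argument runs parallel to the preceding lemma, with the pair $(b,e)$ replaced by $(c,\sigma)$ and $\Asc^{(\s)}$ by $\Des$. Throughout I adopt the convention $\sigma_0=0$ (mirroring $c_0=0$), so that position $0$ is a descent, $0\in\Des(\sigma)$, exactly when $\sigma_1<0$; this is what makes the $i=0$ clause of $U_n$ coincide with Observation~\ref{affCchar}(b).

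\textbf{The map is well defined and lands in $U_n$.} Given $w\in\tldC_n/C_n$, I first note that no $w_i$ can have residue $0$ or $n+1$ modulo $N=2n+2$: each of these is its own negative modulo $N$, so it would force $w_i\equiv -w_i$ and violate the distinctness of $\pm w_1,\dots,\pm w_n$ modulo $N$ that defines a window of $\tldC_n$. Hence each $w_i$ has a unique representative $\sigma_i$ of its residue class in $\{-n,\dots,-1,1,\dots,n\}$, and $c_i=(w_i-\sigma_i)/N\in\bbZ$ is then forced, giving the decomposition $w_i=c_iN+\sigma_i$ uniquely. Since $\pm w_1,\dots,\pm w_n$ are distinct modulo $N$, the same holds for $\pm\sigma_1,\dots,\pm\sigma_n$; as these lie in $\{\pm1,\dots,\pm n\}$, no two of $|\sigma_1|,\dots,|\sigma_n|$ coincide, so $\{|\sigma_1|,\dots,|\sigma_n|\}=\{1,\dots,n\}$ and $\sigma\in C_n$. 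That $(c,\sigma)$ then obeys the defining inequalities of $U_n$ is precisely the content of Observation~\ref{affCchar}(a)--(c).

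\textbf{Surjectivity and the inverse.} Conversely, given $(c,\sigma)\in U_n$ set $w_i=c_iN+\sigma_i$. Because $\sigma\in C_n$ is a signed permutation, $\pm\sigma_1,\dots,\pm\sigma_n$ are distinct modulo $N$, hence so are $\pm w_1,\dots,\pm w_n$, and $w\in\tldC_n$. By \cite[Proposition~8.4.4]{BB} the representatives of $\tldC_n/C_n$ are exactly the elements of $\tldC_n$ whose window is positive and increasing, so it remains to verify these two properties. For monotonicity I compare consecutive entries: if $c_{i+1}>c_i$ then $w_{i+1}-w_i\ge N+(\sigma_{i+1}-\sigma_i)\ge N-2n=2>0$, whereas if $c_{i+1}=c_i$ then the $U_n$ condition forces $i\notin\Des(\sigma)$, so $\sigma_i<\sigma_{i+1}$ and $w_{i+1}-w_i=\sigma_{i+1}-\sigma_i>0$. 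For positivity, if $c_i\ge1$ then $w_i\ge N-n=n+2>0$; if instead $c_i=0$, then $0=c_0\le c_1\le\dots\le c_i=0$ makes every index $0,1,\dots,i-1$ a non-descent of $\sigma$ by the $U_n$ conditions, so $0=\sigma_0<\sigma_1<\dots<\sigma_i$ and in particular $w_i=\sigma_i>0$. Thus $w\in\tldC_n/C_n$, and since reconstructing $w$ from $(c,\sigma)$ plainly inverts the decomposition, the two maps are mutually inverse bijections.

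\textbf{Main obstacle.} The only nonroutine part is this reconstruction step, where one must check that the $U_n$ conditions are exactly strong enough to guarantee a positive increasing window. The two delicate points are the boundary regime $c_i=0$, which must be cascaded all the way back to the descent-at-$0$ convention to force $\sigma_1>0$, and the monotonicity estimate when $c_{i+1}>c_i$, where the bound $N-2n=2$ reveals that the choice $N=2n+2$ is exactly tight against the range $\{-n,\dots,n\}$ of the residues $\sigma_i$.
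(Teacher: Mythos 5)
Your proof is correct and follows essentially the same route as the paper: decompose each window entry by its balanced residue modulo $N$, observe that the forward direction is Observation 2.4, and verify positivity and monotonicity to show that every pair in $U_n$ yields a valid coset representative. You supply a bit more detail than the paper (the explicit $\sigma_0=0$ convention behind the descent at position $0$, and positivity of every entry rather than just $w_1$ plus monotonicity), but the argument is the same.
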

   \begin{proof}
   Let $(c,\sigma)$ be an element of $U_n$.  We need to verify that $[c_1N+\sigma_1, \ldots, c_nN+\sigma_n]$ is the window of a coset representative in $\tldC_n/C_n$.  Since the residues mod $N$ are distinct and not congruent to $0$ or $n+1$ we need only verify that the window elements are positive and in weakly increasing order.
   
   To check that $c_1N+\sigma_1 > 0$, this is true if $\sigma_1>0$.  But if $\sigma_1 < 0$, then position 0 is a descent of $\sigma$ and, by definition of $U_n$, $c_0 = 0 < c_1$.  So, $c_1$ is positive and therefore $c_1N+\sigma_1 > 0$.
   
   For $1 \leq i < n$, to check that $c_iN+\sigma_i  \leq c_{i+1}N + \sigma_{i+1}$, this is clearly true if $c_i<c_{i+1}$.  If not, by definition of $U_n$, it  must be that $c_i=c_{i+1}$ and therefore that $i \not \in \Des(\sigma)$.  Thus $\sigma_i < \sigma_{i+1}$ and therefore
   $c_iN+\sigma_i  \leq c_{i+1}N + \sigma_{i+1}$.
   \end{proof}

\subsection{Inversion sequences, permutations, and signed permutations}\
    
\noindent  
For a permutation $\pi=(\pi_1, \ldots,  \pi_n) \in S_n$, an {\em inversion} of $\pi$ is a pair $i < j$ where $\pi_i > \pi_j$ and a {\em descent} of $\pi$ is a position $i \in [n-1]$ such that $\pi_i > \pi_{i+1}$. The number of inversions of $\pi$ is denoted $\inv(\pi)$ while $\Des(\pi)$ is the set of all descents of $\pi$ and $\des(\pi) = \abs{\Des(\pi)}$.

For a signed permutation $\sigma=(\sigma_1,\ldots, \sigma_n) \in C_n$, there are (at least) two notions of inversion number. One standard definition is
\[
\inv(\sigma) = \big\{(j,i) : 1 \leq j < i \leq n  \ {\rm and} \  \sigma_j > \sigma_i\big\}.
\]
A more natural inversion number from \cite{BB} aligns with the Coxeter length of $\sigma$; for this we also need the number of {\em negative sum pairs},
\[
\nsp(\sigma)  = \big\{(j,i) : 1 \leq j < i \leq n  \ {\rm and} \  \sigma_j + \sigma_i<0\big\}
\]
and the number of negative entries 
\[
\neg(\sigma) = \#\{i \in [n] : \sigma_i < 0\}.
\]
The inversion number for $\sigma \in C_n$ is then defined by
\[
\inv_C(\sigma) = \inv(\sigma) + \neg(\sigma) + \nsp(\sigma).
\]

Classical inversion sequences $e\in\I_n= \I_n^{(1,2, \ldots, n)}$ have been used in various ways to encode permutations.  Define 
$$\Theta: S_n \rightarrow \I_n$$
       by  $\Theta(\pi) = (e_1 \ldots, e_n)$ where
       \[
e_i = \#\{j \in [i-1] : \pi_j > \pi_i\}.
\]
Clearly $\inv(\pi) = \abs{\Theta(\pi)}$.  Moreover, it was shown in \cite{SS} that
\[
\Des(\pi) = \Asc(\Theta(\pi)).
\]

On the other hand, $(2,4,\hdots,2n)$-inversion sequences can be used to encode signed permutations.  
Define 
\[
\Psi: C_n \rightarrow \I_n^{(2,4, \ldots, 2n)}
\]
as follows.
For the signed permutation $\sigma \in C_n$, create the unsigned permutation $\abs\sigma=(\abs{\sigma_1}, \ldots, \abs{\sigma_n})$ and let
$e^*=\Theta(\abs\sigma)$.  This means $e^*_i$ is the number of $j \in [i-1]$ such that
$\abs{\sigma_j} > \abs{\sigma_i}$.
Now define $\Psi(\sigma)=e=(e_1, \ldots, e_n)$ where
\[
e_i = \left \{
\begin{array}{ll}
e_i^* & {\rm if} \  \sigma_i > 0\\
2i-1-e_i^* & {\rm otherwise.}
\end{array}
\right .
\]
Lemma~\ref{lem:invsigma} will prove that $\inv_C(\sigma)  =  \abs{\Psi(\sigma)}$ and \cite[Theorem 3.12\,(4)]{SV} proves
\[\Des(\sigma)=\Asc^{(2,4,\hdots,2n)}\big(\Psi(\sigma)\big).\]
We remind the reader that this notion of ascent set from Equation~\eqref{eq:ascs} is more subtle than the classic notion of ascent set.

\begin{example}
For the signed permutation $\sigma=(-3,-1,2,-5,-4) \in C_5$, the unsigned permutation $\abs\sigma$ is $(3,1,2,5,4)$, whose inversion sequence $e^*=(0,1,1,0,1)$.  As a consequence, \[\Psi(\sigma) = (1,2,1,7,8) \in \I_5^{(2,4,6,8,10)}.\] In addition, $\inv_C(\sigma)=19=1+2+1+7+8$ and 
\[\Des(-3,-1,2,-5,-4) = \{0,3\} = \Asc^{(2,4,6,8,10)}(1,2,1,7,8).\]   
\end{example}

For a Boolean function $f$, let $\chi(f)=1$ if $f$ is true and 0 otherwise.  The following lemma is straightforward.

\begin{lemma}
\label{eij}
For $\sigma \in C_n$,
\[
\chi(\abs{\sigma_j} > \abs{\sigma_i}) = 
\left \{
\begin{array}{ll}
\chi(\sigma_j > \sigma_i) + \chi(\sigma_j +\sigma_i < 0) & {\rm if} \ \sigma_i > 0\\
2-\chi(\sigma_j > \sigma_i) - \chi(\sigma_j +\sigma_i < 0) & {\rm if} \ \sigma_i < 0\\
\end{array}
\right .
\]
\end{lemma}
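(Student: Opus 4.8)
The plan is to verify the identity by a direct case analysis on the signs of the two entries $\sigma_i$ and $\sigma_j$. The essential structural fact I would record first is that, since $\sigma \in C_n$, the values $\sigma_1, \ldots, \sigma_n$ have pairwise distinct absolute values; in particular, for $i \neq j$ we have $\abs{\sigma_i} \neq \abs{\sigma_j}$, so neither $\sigma_j = \sigma_i$ nor $\sigma_j = -\sigma_i$ can occur. This guarantees that none of the three (in)equalities appearing in the statement is ever an equality, which is exactly what lets me freely pass between an indicator and its complement, e.g. $\chi(\sigma_j > \sigma_i) = 1 - \chi(\sigma_j < \sigma_i)$ and $\chi(\sigma_j + \sigma_i < 0) = 1 - \chi(\sigma_j + \sigma_i > 0)$.

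The statement already splits on the sign of $\sigma_i$, so I would treat the two displayed cases separately, and inside each I would further split on the sign of $\sigma_j$, giving four scenarios in all. In the branch $\sigma_i > 0$: if $\sigma_j > 0$ then $\abs{\sigma_j} > \abs{\sigma_i}$ is literally $\sigma_j > \sigma_i$, while $\sigma_j + \sigma_i > 0$ forces the negative-sum indicator to vanish, so the right-hand side collapses to $\chi(\sigma_j > \sigma_i)$; if $\sigma_j < 0$ then $\sigma_j < \sigma_i$ kills the first indicator, and $\abs{\sigma_j} > \abs{\sigma_i}$ rewrites as $-\sigma_j > \sigma_i$, i.e.\ $\sigma_j + \sigma_i < 0$, matching the surviving term. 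The branch $\sigma_i < 0$ is handled the same way: with $\sigma_j > 0$ one has $\sigma_j > \sigma_i$ automatically and $\abs{\sigma_j} > \abs{\sigma_i}$ equivalent to $\sigma_j + \sigma_i > 0$; with $\sigma_j < 0$ one has $\sigma_j + \sigma_i < 0$ automatically and $\abs{\sigma_j} > \abs{\sigma_i}$ equivalent to $\sigma_j < \sigma_i$. In both of these scenarios the constant $2$, together with the two subtracted indicators, reproduces the correct value once the automatically determined indicator is set to $1$ and the complement rule above is applied to the remaining one.

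There is no genuine obstacle here, which matches the paper's description of the lemma as straightforward. The only points requiring a little care are bookkeeping ones: keeping track of which of the two indicators $\chi(\sigma_j > \sigma_i)$ and $\chi(\sigma_j + \sigma_i < 0)$ is forced to a constant in each of the four sign scenarios, and invoking the distinct-absolute-values observation to justify replacing a strict indicator by one minus the opposite strict indicator rather than worrying about a boundary case of equality. Assembling the four scenarios into the two-line formula then completes the verification.
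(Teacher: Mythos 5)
Your case analysis is correct and complete: the four sign scenarios each check out, and you rightly flag that the distinctness of absolute values (for $i \neq j$) is what rules out the boundary equalities $\sigma_j = \sigma_i$ and $\sigma_j + \sigma_i = 0$. The paper offers no proof at all, dismissing the lemma as ``straightforward,'' and your argument is precisely the routine verification the authors have in mind, so there is nothing to compare beyond noting that you have supplied the omitted details faithfully.
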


\begin{corollary}
\label{eidef}
Given $\sigma \in C_n$, define $e=\Psi(\sigma)=(e_1, \ldots, e_n)$.  Then for $1 \leq i \leq n$,
\[
e_i = \sum_{j=1}^{i} \big(\chi(\sigma_j > \sigma_i) + \chi(\sigma_j +\sigma_i < 0)\big).
\]
\end{corollary}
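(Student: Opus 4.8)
The plan is to unwind the definition of $\Psi(\sigma) = e$ according to the sign of $\sigma_i$ and apply Lemma~\ref{eij} termwise to the sum $e^*_i = \sum_{j=1}^{i-1}\chi(\abs{\sigma_j} > \abs{\sigma_i})$ that is supplied by the definition of $\Theta$.

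In the case $\sigma_i > 0$ we have $e_i = e^*_i$, and substituting the first branch of Lemma~\ref{eij} rewrites each summand of $e^*_i$ as $\chi(\sigma_j > \sigma_i) + \chi(\sigma_j + \sigma_i < 0)$. This already agrees with the target formula except that the claimed sum runs up to $j = i$; I would close the gap by noting that the $j=i$ term vanishes, since $\chi(\sigma_i > \sigma_i) = 0$ and $\chi(2\sigma_i < 0) = 0$ when $\sigma_i > 0$.

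In the case $\sigma_i < 0$ we have $e_i = 2i - 1 - e^*_i$, and substituting the second branch of Lemma~\ref{eij} turns $e^*_i$ into $2(i-1)$ minus the same sum of indicator pairs. Simplifying the constants ($2i - 1 - 2(i-1) = 1$) leaves $e_i = 1 + \sum_{j=1}^{i-1}\big(\chi(\sigma_j > \sigma_i) + \chi(\sigma_j + \sigma_i < 0)\big)$. The leftover $1$ is exactly the $j=i$ contribution in this regime, because $\chi(\sigma_i > \sigma_i) = 0$ but $\chi(2\sigma_i < 0) = 1$ when $\sigma_i < 0$; absorbing it into the sum produces the claimed expression.

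I do not anticipate a genuine obstacle here: the content is purely the bookkeeping of the constants $2(i-1)$ and $2i-1$. The one point worth isolating is that the diagonal term $j = i$ always carries precisely the value $\chi(\sigma_i < 0)$, which is exactly the discrepancy between the two branches of Lemma~\ref{eij}. It is this diagonal contribution that lets both sign cases collapse into the single closed form with the summation index extended all the way to $i$.
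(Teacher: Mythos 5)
Your argument is correct and follows the paper's own proof essentially verbatim: the same case split on the sign of $\sigma_i$, the same termwise application of Lemma~\ref{eij}, and the same observation that the diagonal term $j=i$ contributes exactly $\chi(\sigma_i<0)$, which absorbs the leftover constant in the negative case. No gaps.
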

\begin{proof}
By definition of $\Psi$, if $\sigma_i > 0$,
\[
e_i = \sum_{j=1}^{i-1} \chi(\abs{\sigma_j} > \abs{\sigma_i}).
\]
Since   $\chi(\sigma_i>\sigma_i) = 0$ and  $\chi(\sigma_i+\sigma_i<0) = 0$, the result follows from Lemma \ref{eij}.

If $\sigma_i < 0$, then by definition of $\Psi$,
\begin{eqnarray*}
e_i &= & 2i-1 - \sum_{j=1}^{i-1} \chi(\abs{\sigma_j} > \abs{\sigma_i})\\
& = & 2i-1 - \sum_{j=1}^{i-1} \big(2-\chi(\sigma_j > \sigma_i) - \chi(\sigma_j +\sigma_i < 0)\big)\\
& = & 1 + \sum_{j=1}^{i-1} \big(\chi(\sigma_j > \sigma_i)  +\chi(\sigma_j +\sigma_i < 0)\big)
\end{eqnarray*}
and since   $\chi(\sigma_i>\sigma_i) = 0$ and $\chi(\sigma_i+\sigma_i<0) = 1$, the result follows from Lemma \ref{eij}.
\end{proof}

\begin{lemma} \label{lem:invsigma}
$\Psi: C_n \rightarrow \I_n^{(2,4, \ldots, 2n)}$ satisfies, for $\sigma \in C_n$,  
\begin{eqnarray*}
\inv_C(\sigma) & = & \abs{\Psi(\sigma)}
\end{eqnarray*}
\end{lemma}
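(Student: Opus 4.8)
The plan is to prove the identity by summing the expression for $e_i$ obtained in Corollary~\ref{eidef} over all $i$ and recognizing each of the three resulting sums as one of the three components of $\inv_C(\sigma)$. By Corollary~\ref{eidef}, we have
\[
\abs{\Psi(\sigma)} = \sum_{i=1}^n e_i = \sum_{i=1}^n \sum_{j=1}^i \bigl(\chi(\sigma_j > \sigma_i) + \chi(\sigma_j + \sigma_i < 0)\bigr).
\]
First I would split this double sum into the $\chi(\sigma_j > \sigma_i)$ part and the $\chi(\sigma_j + \sigma_i < 0)$ part, and within each, separate the diagonal terms $j=i$ from the off-diagonal terms $j < i$. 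This separation is forced because $\inv(\sigma)$ and $\nsp(\sigma)$ as defined in the text range over strict pairs $j < i$, whereas the sum in Corollary~\ref{eidef} runs over $j \leq i$; the diagonal must therefore be accounted for separately.

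Next I would evaluate the diagonal contributions. For the first summand, $\chi(\sigma_i > \sigma_i) = 0$ always, so the diagonal contributes nothing. For the second summand, $\chi(\sigma_i + \sigma_i < 0) = \chi(2\sigma_i < 0) = \chi(\sigma_i < 0)$, and summing this over $i$ gives exactly $\neg(\sigma)$, the number of negative entries. The off-diagonal terms are then immediate: $\sum_{1 \leq j < i \leq n} \chi(\sigma_j > \sigma_i) = \inv(\sigma)$ and $\sum_{1 \leq j < i \leq n} \chi(\sigma_j + \sigma_i < 0) = \nsp(\sigma)$, directly from their definitions in the text. Assembling these pieces yields
\[
\abs{\Psi(\sigma)} = \inv(\sigma) + \nsp(\sigma) + \neg(\sigma) = \inv_C(\sigma),
\]
which is the claim.

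The proof is essentially bookkeeping, so there is no deep obstacle; the only point requiring care is the bookkeeping of the diagonal, namely recognizing that the apparent mismatch between the $j \leq i$ range in Corollary~\ref{eidef} and the $j < i$ ranges in the definitions of $\inv$ and $\nsp$ is resolved precisely by the term $\chi(\sigma_i < 0)$ that produces $\neg(\sigma)$. I would present the computation as a short chain of equalities, making explicit at each step which of the three definitions is being invoked, so that the reader can see the three statistics $\inv$, $\nsp$, and $\neg$ emerge cleanly from the single double sum.
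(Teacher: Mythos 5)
Your proof is correct and is essentially identical to the paper's: both apply Corollary~\ref{eidef}, sum over $i$, split the double sum into the diagonal (which yields $\neg(\sigma)$ via $\chi(2\sigma_i<0)=\chi(\sigma_i<0)$) and the off-diagonal terms (which yield $\inv(\sigma)$ and $\nsp(\sigma)$). No differences worth noting.
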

\begin{proof}
For  $\sigma \in C_n$ with $\Psi(\sigma)=e=(e_1, \ldots, e_n)$, applying Corollary \ref{eidef} gives:
\begin{eqnarray*}
 \abs{\Psi(\sigma)}=\abs{e}& =& \sum_{i=1}^n e_i\\
 & = & \sum_{i=1}^n \sum_{j=1}^{i} \big(\chi(\sigma_j > \sigma_i) + \chi(\sigma_j +\sigma_i < 0)\big)\\
 & = & \sum_{i=1}^n \big(\chi(\sigma_i> \sigma_i) + \chi(\sigma_i+\sigma_i < 0)\big) + \sum_{1 \leq j < i \leq n}\chi(\sigma_j> \sigma_i) + 
 \sum_{1 \leq j < i \leq n}\chi(\sigma_j+\sigma_i <0) \\
 & = & \neg(\sigma) + \inv(\sigma) + \nsp(\sigma) = \inv_C(\sigma).
 \end{eqnarray*}
 \end{proof}

We can now write a new expression to count class inversions of elements in the parabolic quotient $\tldC_n/C_n$ along with a new expression for entries of the class inversion vector.
   
\begin{lemma} 
For $w \in \tldC_n/C_n$ with window
 $[w_1, \ldots, w_n] = [c_1 N + \sigma_1, c_2N + \sigma_2, \ldots, c_nN+\sigma_n]$
where $\sigma=(\sigma_1, \ldots, \sigma_n) \in C_n$ and for $1 \leq j \leq i$,
\[
   I_{i,j}(w) = 2c_i - \chi(\sigma_j > \sigma_i) - \chi(\sigma_j + \sigma_i < 0).
\]
\end{lemma}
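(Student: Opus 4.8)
The plan is to compute $I_{i,j}(w)$ directly from the definition in Equation~\eqref{eq:Iij} by substituting the window representation $w_k = c_k N + \sigma_k$, and then to recognize the resulting floor expressions as exactly the indicator quantities appearing in Lemma~\ref{eij}. Starting from
\[
I_{i,j}(w) = \floor{\frac{w_i - w_j}{N}} + \floor{\frac{w_i + w_j}{N}},
\]
I would write $w_i - w_j = (c_i - c_j)N + (\sigma_i - \sigma_j)$ and $w_i + w_j = (c_i + c_j)N + (\sigma_i + \sigma_j)$. Since $c_i - c_j$ and $c_i + c_j$ are integers, they pull out of the floor functions, giving
\[
I_{i,j}(w) = (c_i - c_j) + \floor{\frac{\sigma_i - \sigma_j}{N}} + (c_i + c_j) + \floor{\frac{\sigma_i + \sigma_j}{N}} = 2c_i + \floor{\frac{\sigma_i - \sigma_j}{N}} + \floor{\frac{\sigma_i + \sigma_j}{N}}.
\]

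The main work, then, is to evaluate the two remaining floor terms. Because $\sigma \in C_n$, each entry satisfies $1 \le \abs{\sigma_k} \le n$, so both $\sigma_i - \sigma_j$ and $\sigma_i + \sigma_j$ lie strictly between $-N$ and $N$ (indeed in $[-(2n-1), 2n-1] \subset (-N, N)$ since $N = 2n+2$). For a quantity $x$ with $-N < x < N$, the floor $\floor{x/N}$ equals $0$ when $x \ge 0$ and equals $-1$ when $x < 0$; so $\floor{x/N} = -\chi(x < 0)$. Applying this twice yields
\[
\floor{\frac{\sigma_i - \sigma_j}{N}} = -\chi(\sigma_i - \sigma_j < 0) = -\chi(\sigma_j > \sigma_i),
\]
and
\[
\floor{\frac{\sigma_i + \sigma_j}{N}} = -\chi(\sigma_i + \sigma_j < 0) = -\chi(\sigma_j + \sigma_i < 0).
\]
Combining these gives the claimed identity $I_{i,j}(w) = 2c_i - \chi(\sigma_j > \sigma_i) - \chi(\sigma_j + \sigma_i < 0)$.

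I expect the only genuine subtlety to be the boundary behavior of the floor function, specifically confirming that $\sigma_i - \sigma_j$ and $\sigma_i + \sigma_j$ never reach $\pm N$ and handling the boundary case $x = 0$ correctly (where $\floor{0/N} = 0 = -\chi(0 < 0)$, consistent with the formula). For the off-diagonal terms $j < i$ this requires $\abs{\sigma_i \pm \sigma_j} \le 2n < N$, which is immediate; the diagonal case $j = i$ needs a brief note that $\sigma_i - \sigma_i = 0$ and $\sigma_i + \sigma_i = 2\sigma_i$ with $\abs{2\sigma_i} \le 2n < N$ still holds, matching the convention $\chi(\sigma_i > \sigma_i) = 0$ and $\chi(\sigma_i + \sigma_i < 0) = \chi(\sigma_i < 0)$ used in Corollary~\ref{eidef}. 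Everything else is routine arithmetic with the floor function, so this should be a short computational proof rather than one requiring any structural insight.
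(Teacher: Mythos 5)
Your proposal is correct and follows essentially the same computation as the paper's proof: substitute $w_k = c_kN + \sigma_k$ into the definition of $I_{i,j}$, pull the integer multiples of $N$ out of the floors, and evaluate the remaining floors as $-\chi(\cdot < 0)$ using the bound $\abs{\sigma_i} + \abs{\sigma_j} < N$. Your extra attention to the boundary and diagonal cases is a slightly more careful write-up of the same argument.
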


\begin{proof}
  From \eqref{eq:Iij},
   \begin{eqnarray*}
    I_{i,j}(w) & = &  \floor{\frac{{w}_i - {w}_j}{N}}  +  \floor{\frac{{w}_i + {w}_j}{N}}\\
    & = &  \floor{\frac{c_iN+\sigma_i - c_jN-\sigma_j}{N}}  +  \floor{\frac{c_iN+\sigma_i +c_jN+\sigma_j}{N}}\\
    & = & 2c_i +  \floor{\frac{\sigma_i - \sigma_j}{N}} +  \floor{\frac{\sigma_i +\sigma_j}{N}}\\
    &  = & 2c_i - \chi(\sigma_j > \sigma_i) - \chi(\sigma_i + \sigma_j<0),
    \end{eqnarray*}
    since $\abs{\sigma_i}+\abs{\sigma_j} < N$.
\end{proof}

\begin{corollary}
For  $w= [c_1 N + \sigma_1, c_2N + \sigma_2, \ldots, c_nN+\sigma_n] \in \tldC_n/C_n$,
   \[
   \inv_{\tldC}(w) = \sum_{i=1}^n 2ic_i - \inv_C(\sigma).
   \]
\end{corollary}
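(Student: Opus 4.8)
The plan is to reduce the statement entirely to the two immediately preceding results. Beginning from the definitions, I would write
\[
\inv_{\tldC}(w) = \sum_{i=1}^n I_i = \sum_{i=1}^n \sum_{j=1}^i I_{i,j}(w),
\]
and then substitute the per-entry formula $I_{i,j}(w) = 2c_i - \chi(\sigma_j > \sigma_i) - \chi(\sigma_j + \sigma_i < 0)$ supplied by the preceding lemma into the double sum.

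Next I would split the result into two pieces. The first piece, $\sum_{i=1}^n \sum_{j=1}^i 2c_i$, collapses because the summand is independent of $j$: there are exactly $i$ values of $j$, so the inner sum equals $2ic_i$ and the total is $\sum_{i=1}^n 2ic_i$, which is the leading term of the claimed identity.

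For the second piece I would recognize that
\[
\sum_{i=1}^n \sum_{j=1}^i \big(\chi(\sigma_j > \sigma_i) + \chi(\sigma_j + \sigma_i < 0)\big)
\]
is exactly $\sum_{i=1}^n e_i$ with $e = \Psi(\sigma)$, by Corollary \ref{eidef}, and that this equals $\abs{\Psi(\sigma)} = \inv_C(\sigma)$ by Lemma \ref{lem:invsigma}. Combining the two pieces gives $\inv_{\tldC}(w) = \sum_{i=1}^n 2ic_i - \inv_C(\sigma)$.

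There is no genuine obstacle here, since the substance lives in the preceding lemma (which derived the formula for $I_{i,j}$ from $\abs{\sigma_i} + \abs{\sigma_j} < N$) and in Lemma \ref{lem:invsigma} (which identified the indicator sum with $\inv_C(\sigma) = \neg(\sigma) + \inv(\sigma) + \nsp(\sigma)$). The one point meriting care is that the index range $1 \le j \le i$ in the definition of $I_i$ must match the range in Corollary \ref{eidef}, including the diagonal term $j = i$; this is consistent because the preceding lemma defines $I_{i,j}$ over precisely $1 \le j \le i$, and on the diagonal both expressions contribute $\chi(\sigma_i + \sigma_i < 0) = \chi(\sigma_i < 0)$, which accounts for the $\neg(\sigma)$ summand.
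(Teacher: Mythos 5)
Your proposal is correct and is exactly the derivation the paper intends: the paper states this as an immediate corollary of the preceding lemma, and the route via $\sum_{i}\sum_{j\le i} I_{i,j}$, Corollary~\ref{eidef}, and Lemma~\ref{lem:invsigma} is the same one. Your check that the diagonal term $j=i$ is included consistently on both sides (contributing $\neg(\sigma)$) is the one subtlety, and you handled it correctly.
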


\subsection{Proof of Theorem \ref{CtoL}}
   
\noindent
Collecting the observations from the previous three subsections we have the following.
\begin{theorem}
   \label{newCtoL}
   The mapping 
   \[
   w=[c_1N+\sigma_1,  c_2N+\sigma_2, \ldots, c_nN+\sigma_n] \mapsto (2c_1-e_1, 4c_2-e_2, \ldots, 2nc_n-e_n) = \la,
   \]
   where $w \in  \tldC_n/C_n  $, $(\sigma_1, \ldots, \sigma_n)=\sigma  \in C_n$, and $(e_1, \ldots, e_n) = \Psi(\sigma)$, is a bijection
   $$    \tldC_n/C_n  \rightarrow L_n$$
   satisfying  $\abs{\la} = \inv_{\tldC}(w)$ with $\la_i = I_i(w)$ for $1 \leq i \leq n$.
\end{theorem}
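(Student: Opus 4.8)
The plan is to exhibit the stated map as a composition of the three bijections assembled in this section, following the overview $w \leftrightarrow (c,\sigma) \leftrightarrow (c,\Psi(\sigma)) \leftrightarrow \lambda$, and then to read off both numerical identities directly from the component lemmas.

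Throughout I take $\s=(2,4,\ldots,2n)$, so $s_i=2i$, and I first record that $L_n=\Lns$: since $\lambda_i/(2i)\le \lambda_{i+1}/(2(i+1))$ holds precisely when $\lambda_i/i\le\lambda_{i+1}/(i+1)$, the two defining chains of inequalities coincide. This lets me use the Lemma of \cite{SS} with target $T_n^{(\s)}$ as a description of $L_n$ itself. The first arrow is the bijection $\tldC_n/C_n\to U_n$, $w\mapsto(c,\sigma)$; the third arrow is the inverse of the $\Lns\to T_n^{(\s)}$ bijection. For the middle arrow I would verify that $(c,\sigma)\mapsto(c,\Psi(\sigma))$ is a bijection $U_n\to T_n^{(\s)}$. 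Because $\Psi\colon C_n\to\I_n^{(\s)}$ is a bijection and $\Des(\sigma)=\Asc^{(\s)}(\Psi(\sigma))$ as subsets of $\{0,\ldots,n-1\}$ (the common index range in both $U_n$ and $T_n^{(\s)}$), the constraint ``$i\in\Des(\sigma)\implies c_i<c_{i+1}$'' defining $U_n$ becomes exactly ``$i\in\Asc^{(\s)}(e)\implies b_i<b_{i+1}$'' defining $T_n^{(\s)}$ under the identification $b=c$, $e=\Psi(\sigma)$, while the condition $c_i\le c_{i+1}$ is shared. Hence the middle arrow is a bijection, and the composite $\tldC_n/C_n\to L_n$ is a bijection.

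To make the composite explicit, I would invert the \cite{SS} map: with $b=\ceil{\lambda}^{(\s)}$ and $e=e^{(\s)}(\lambda)$ one has $e_i=s_ib_i-\lambda_i$, that is $\lambda_i=2ib_i-e_i$. Tracing $w\mapsto(c,\sigma)\mapsto(c,e)\mapsto\lambda$ with $b=c$ then gives $\lambda_i=2ic_i-e_i$, which is precisely the formula in the statement.

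It remains to establish the two identities. For the total I would compute $\abs{\lambda}=\sum_i(2ic_i-e_i)=\sum_i 2ic_i-\abs{\Psi(\sigma)}=\sum_i 2ic_i-\inv_C(\sigma)$ using Lemma \ref{lem:invsigma}, and this equals $\inv_{\tldC}(w)$ by the Corollary of the previous subsection. For the entrywise claim I would use $I_i=\sum_{j=1}^i I_{i,j}(w)$ together with $I_{i,j}(w)=2c_i-\chi(\sigma_j>\sigma_i)-\chi(\sigma_j+\sigma_i<0)$, which sum to $I_i=2ic_i-\sum_{j=1}^i\big(\chi(\sigma_j>\sigma_i)+\chi(\sigma_j+\sigma_i<0)\big)=2ic_i-e_i=\lambda_i$, the penultimate equality being Corollary \ref{eidef}. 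Beyond this bookkeeping there is no real obstacle; the one point demanding care is the middle arrow, where the identity $\Des(\sigma)=\Asc^{(\s)}(\Psi(\sigma))$ is exactly what forces the $U_n$ and $T_n^{(\s)}$ constraints to correspond, and so it is this fact (rather than any of the arithmetic) that carries the argument.
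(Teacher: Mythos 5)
Your proposal is correct and follows exactly the route the paper takes: the paper's proof of this theorem is literally "collecting the observations from the previous three subsections," namely the $\tldC_n/C_n\to U_n$ lemma, the $\Lns\to T_n^{(\s)}$ lemma from \cite{SS}, the properties of $\Psi$ (Lemma~\ref{lem:invsigma} and $\Des=\Asc^{(\s)}\circ\Psi$), and the formula for $I_{i,j}(w)$ with its corollary. Your write-up assembles these same ingredients in the same order, so it is essentially the paper's own argument made explicit.
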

     From this characterization, we know this is exactly the bijection of  Bousquet-M\'elou and Eriksson.

\newpage
\section{A thesaurus of statistics}\label{sec:stats}

In this section, we present the thesaurus between combinatorial statistics on an element 
\[w=[w_1,w_2,\hdots,w_n]=[c_1N+\sigma_1,  c_2N+\sigma_2, \ldots, c_nN+\sigma_n]\in \tldC_n/C_n\] 
and the corresponding lecture hall partition 
\[\la=(\la_1,\la_2,\hdots,\la_n)=(2c_1-e_1, 4c_2-e_2, \ldots, 2nc_n-e_n)\in L_n\]
given by Theorem~\ref{newCtoL}, in which $\sigma=(\sigma_1, \ldots, \sigma_n) \in C_n$, and $e=(e_1, \ldots, e_n) = \Psi(\sigma)$.

\subsection{The $\inv_{\tldC}$ statistic}\

\noindent
The Coxeter length of an element $w\in\tldC_n$ is its number of class inversions.  This corresponds to the sum of the parts of $\la$, as shown in Theorem~\ref{newCtoL}.
 \[\inv_{\tldC}(w) = \abs{\la}.\]
 
\subsection{The $\neg$ statistic}\

\noindent
We define $\neg(w)$ for $w\in\tldC_n/C_n$ to be $\neg(\sigma)$, the number of negative signs in its corresponding signed permutation $\sigma$.  Equivalently, $\neg(w)$ is the number of values in its window whose value modulo $N$ is greater than $n+1$.  This can be calculated directly from ceiling functions applied to~$\lam$.
\begin{lemma}
  \label{neg}
$$\neg(w) = 2 \abs{\ceil{\la}^{(2,4,\ldots,2n)}} - \abs{\ceil{\la}^{(1,2,\ldots,n)}}.$$ 
 \end{lemma}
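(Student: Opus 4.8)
The plan is to express $\neg(w) = \neg(\sigma)$ directly in terms of the components of $\la$ by unpacking the ceiling functions, recalling that $\la_i = 2ic_i - e_i$ where $e = \Psi(\sigma)$. First I would compute the two ceiling vectors explicitly. Since $0 \le e_i < 2i$ by definition of $\I_n^{(2,4,\ldots,2n)}$, we have $\lceil \la_i / (2i) \rceil = \lceil (2ic_i - e_i)/(2i) \rceil = c_i$ whenever $e_i > 0$ and equals $c_i$ when $e_i = 0$ as well, so $\ceil{\la}_i^{(2,4,\ldots,2n)} = c_i$ for all $i$ (taking a moment to confirm the $e_i = 0$ boundary case). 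Thus $2\abs{\ceil{\la}^{(2,4,\ldots,2n)}} = 2\sum_{i=1}^n c_i$.

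Next I would handle the second term. With $s_i = i$, I need $\lceil \la_i / i \rceil = \lceil (2ic_i - e_i)/i \rceil = 2c_i + \lceil -e_i/i \rceil = 2c_i - \lfloor e_i/i \rfloor$. Because $0 \le e_i < 2i$, the quantity $\lfloor e_i / i \rfloor$ equals $0$ if $e_i < i$ and equals $1$ if $e_i \ge i$. Therefore $\ceil{\la}_i^{(1,2,\ldots,n)} = 2c_i - \chi(e_i \ge i)$, and summing gives $\abs{\ceil{\la}^{(1,2,\ldots,n)}} = 2\sum_{i=1}^n c_i - \#\{i : e_i \ge i\}$. Subtracting the two displayed sums then collapses the $c_i$ contributions entirely, leaving
\[
2\abs{\ceil{\la}^{(2,4,\ldots,2n)}} - \abs{\ceil{\la}^{(1,2,\ldots,n)}} = \#\{i \in [n] : e_i \ge i\}.
\]

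The remaining—and main—step is to identify $\#\{i : e_i \ge i\}$ with $\neg(\sigma)$. Here I would invoke the definition of $\Psi$: for each $i$, if $\sigma_i > 0$ then $e_i = e_i^*$ where $e_i^* = \#\{j \in [i-1] : \abs{\sigma_j} > \abs{\sigma_i}\} \le i-1 < i$, so such an $i$ does not contribute; if $\sigma_i < 0$ then $e_i = 2i - 1 - e_i^* \ge 2i - 1 - (i-1) = i$, so such an $i$ does contribute. This shows $\{i : e_i \ge i\}$ is exactly the set of positions where $\sigma_i < 0$, whose cardinality is $\neg(\sigma) = \neg(w)$, completing the proof. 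The only delicate point to verify carefully is the equivalence $e_i \ge i \iff \sigma_i < 0$, which hinges on the bound $0 \le e_i^* \le i-1$; everything else is a routine unwinding of the ceiling identities above.
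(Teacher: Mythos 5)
Your proof is correct and follows essentially the same route as the paper's: both arguments come down to the identity $\lfloor e_i/i\rfloor = \chi(\sigma_i<0)$ (so $\neg(\sigma)=\sum_i\lfloor e_i/i\rfloor$) combined with the ceiling computations $\lceil\la_i/(2i)\rceil=c_i$ and $\lceil\la_i/i\rceil=2c_i-\lfloor e_i/i\rfloor$. The paper just runs the chain in the opposite direction, starting from $\neg(w)=\sum_i\lfloor e_i/i\rfloor$ and substituting $e_i=2i\lceil\la_i/(2i)\rceil-\la_i$; your write-up makes the step the paper attributes to ``the definition of $\Psi$'' more explicit, which is fine.
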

 \begin{proof}
 From the definition of $\Psi$ we know that 
$ \neg(w) = \neg(\sigma) = \sum_{i=1}^n \floor{\frac{e_i}{i}}$.  So,
 \begin{eqnarray*}
 \neg(w) &=& \sum_{i=1}^n \floor{\frac{e_i}{i}}\\
 & = & \sum_{i=1}^n\floor{\frac{2i \ceil{\frac{\la_i}{2i}}-\la_i}{i}}\\
 & = & \sum_{i=1}^n  2\ceil{\frac{\la_i}{2i}} -  \sum_{i=1}^n \ceil{\frac{\la_i}{i}}\\
 & = & 2 \abs{\ceil{\la}^{(2,4,\ldots,2n)}} - \abs{\ceil{\la}^{(1,2,\ldots,n)}}.
 \end{eqnarray*}
 \end{proof}
In Lemma~\ref{lem:odd}, we show that this $\neg$ statistic equals the unexpected $o(\ceil{\la})$ statistic that occurs in the work of Bousquet-M\'elou and Eriksson in \cite{BME3}.

\subsection{The $\alpha$ and $\beta$ statistics}\

\noindent
The $\alpha$ and $\beta$ statistics on elements $w\in\tldC_n/C_n$ originally appeared in a refinement of Bott's formula.  The statistics $\alpha(w)$ and $\beta(w)$ are the number of occurrences of the generators $s_0$ and $s_n$ in any reduced word for $w$.  Bousquet-M\'elou and Eriksson translated them into the language of lecture hall partitions in their combinatorial proof of a refinement of Bott's formula \cite{BME3}.  We present a natural way in which the $\beta$ statistic appears.
\begin{lemma}
 \label{beta}
 $$\beta(w) = \sum_{i=1}^n c_i = \abs{\ceil{\la}^{(2,4,\ldots,2n)}}.$$
\end{lemma}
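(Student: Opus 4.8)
The plan is to establish the two equalities separately. The equality $\sum_{i=1}^n c_i=\abs{\ceil{\la}^{(2,4,\ldots,2n)}}$ is immediate from Theorem~\ref{newCtoL}: since $\la_i=2ic_i-e_i$ with $0\le e_i<2i$, we have $\ceil{\la_i/(2i)}=\ceil{c_i-e_i/(2i)}=c_i$ for each $i$, and summing gives the claim. The substance of the lemma is therefore the first equality $\beta(w)=\sum_i c_i$, which ties the Coxeter statistic $\beta$ to the window arithmetic; this is where I would focus. The tool is the standard inversion description of generator multiplicities: for a reduced word $w=s_{i_1}\cdots s_{i_\ell}$ the reflections $t_k=(s_{i_1}\cdots s_{i_{k-1}})\,s_{i_k}\,(s_{i_1}\cdots s_{i_{k-1}})^{-1}$ are distinct, each conjugate to $s_{i_k}$, and exhaust the inversion set $\{t : \ell(tw)<\ell(w)\}$ independently of the chosen word \cite{BB}. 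Hence for any reflection conjugacy class $\mathcal C$, the number of $k$ with $s_{i_k}\in\mathcal C$ equals $\#\{t\in\mathrm{Inv}(w):t\in\mathcal C\}$; if I can show that $s_n$ is the only simple generator in its class, this number is $\beta(w)$, which in particular re-proves that $\beta$ is well defined.

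Next I would pass to the reflection representation, where $\mathrm{Inv}(w)$ is the set of hyperplanes separating the fundamental alcove from $wA_0$ and where the sum $\inv_{\tldC}(w)=\sum_{j\le i}I_{i,j}$ from \eqref{eq:Iij} separates into the short-root hyperplanes $w_i=\pm w_j+kN$ (the off-diagonal terms $I_{i,j}$ with $j<i$) and the long-root hyperplanes $2w_i=mN$, i.e.\ $w_i=m(n+1)$ (the diagonal terms $I_{i,i}=\floor{2w_i/N}$). The generator $s_n$ is a long-root reflection, so only long-root inversions can be conjugate to it and I may restrict to the diagonal.

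The crux is to separate the class of $s_n$ from that of the other long-root generator $s_0$. Here the key observation is that every generator maps the family $\{w_i=m(n+1)\}_{i,m}$ to itself while preserving the parity of $m$: the transpositions $s_1,\ldots,s_{n-1}$ permute coordinates; $s_0$ acts by $w_1\mapsto-w_1$; a translation in $\tldC_n$ shifts a coordinate by the period $N=2(n+1)$; and $s_n$ reflects $w_n\mapsto 2(n+1)-w_n$. In each case $m$ changes by an even amount or not at all, so parity of $m$ is a conjugacy invariant. Consequently the long-root reflections split into exactly two classes, the class of $s_0$ (the wall $w_1=0$, $m=0$) being the even-$m$ hyperplanes and the class of $s_n$ (the affine wall $w_n=n+1$, $m=1$) being the odd-$m$ hyperplanes. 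This distinguishes $s_n$ from $s_0$ as required and shows $\beta(w)$ counts the odd-$m$ long-root inversions.

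Finally I would count these. For a coset representative every $w_i>0$, so the separating long-root hyperplanes in coordinate $i$ are $w_i=m(n+1)$ with $1\le m$ and $m(n+1)<w_i$, i.e.\ $m<w_i/(n+1)=2c_i+\sigma_i/(n+1)$. Since $1\le\abs{\sigma_i}\le n$, the fraction $\sigma_i/(n+1)$ lies in $(-1,1)\setminus\{0\}$, so whether $\sigma_i>0$ or $\sigma_i<0$ the odd values of $m$ in range are exactly $\{1,3,\ldots,2c_i-1\}$, giving $c_i$ of them; summing over $i$ produces $\beta(w)=\sum_i c_i$. The routine pieces are this last case check and the arithmetic of the second equality. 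The main obstacle is the middle step: converting the generator $s_n$ into a well-defined inversion statistic and, above all, distinguishing its conjugacy class from that of $s_0$. The parity-of-level invariant is what makes the distinction clean, and confirming that every generator preserves it is the step I would treat with the most care.
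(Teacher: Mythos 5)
Your proof is correct, but it takes a genuinely different route from the paper's. The paper argues dynamically: building $w$ up by a reduced word, each application of $s_n$ (as an ascent) replaces a window entry $c_jN+n$ by $(c_j+1)N-n$, so each $s_n$ increments exactly one $c_j$ by $1$ while the other generators leave the vector $c$ untouched; starting from the identity this immediately gives $\beta(w)=\sum_i c_i$, and the second equality is read off from Theorem~\ref{newCtoL} exactly as you do. You instead argue statically, identifying $\beta(w)$ with the number of inversions of $w$ in the conjugacy class of $s_n$, locating those among the diagonal contributions $I_{i,i}=\floor{2w_i/N}$ (the hyperplanes $x_i=m(n+1)$), separating the classes of $s_0$ and $s_n$ by the parity of the level $m$, and then counting odd levels below $w_i$ to get $c_i$ per coordinate. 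Your approach is heavier but buys more: it re-proves that $\beta$ is well defined, and the same computation with even $m$ yields $\alpha(w)=\sum_i c_i-\neg(w)$ directly (matching Lemmas~\ref{neg} and~\ref{alpha}), whereas the paper derives $\alpha$ by a separate balancing argument. One point to tighten: the parity invariant only shows the odd-$m$ and even-$m$ long-root reflections lie in \emph{different} classes, i.e.\ that the class of $s_n$ is \emph{contained} in the odd-$m$ reflections; for your count you need the reverse containment as well. This follows in one line from the fact that every reflection is conjugate to some simple generator, so the odd-$m$ long-root reflections, being conjugate to neither $s_0$ nor any $s_1,\dots,s_{n-1}$, must all be conjugate to $s_n$ --- but you should say so rather than asserting ``exactly two classes'' as a consequence of the invariant alone.
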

\begin{proof}
  The application of an $s_n$ generator as an ascent to a window takes the entry $w_j=c_jN+n$ that is equal to $n$ modulo $N$ and replaces it by $w_j+2=(c_j+1)N-n$, equal to $-n$ modulo $N$, in effect increasing $c_j$ by $1$.  From this we conclude that the number of $s_n$ generators is $\sum_{i=1}^n c_i$.
\end{proof}
The $\alpha$ statistic satisfies
 \begin{lemma}
 \label{alpha}
 $$ \alpha(w)= \sum_{i=1}^n c_i \ - \neg(w)=   \abs{\ceil{\la}^{(1,2,\ldots,n)}} -  \abs{\ceil{\la}^{(2,4,\ldots,2n)}}.$$
 \end{lemma}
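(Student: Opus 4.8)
The second equality is immediate and requires no new ideas: substituting Lemma~\ref{beta} in the form $\sum_{i=1}^n c_i=\abs{\ceil{\la}^{(2,4,\ldots,2n)}}$ and Lemma~\ref{neg} in the form $\neg(w)=2\abs{\ceil{\la}^{(2,4,\ldots,2n)}}-\abs{\ceil{\la}^{(1,2,\ldots,n)}}$ gives
\[
\sum_{i=1}^n c_i-\neg(w)=\abs{\ceil{\la}^{(2,4,\ldots,2n)}}-\Big(2\abs{\ceil{\la}^{(2,4,\ldots,2n)}}-\abs{\ceil{\la}^{(1,2,\ldots,n)}}\Big)=\abs{\ceil{\la}^{(1,2,\ldots,n)}}-\abs{\ceil{\la}^{(2,4,\ldots,2n)}}.
\]
So the whole content is the first equality $\alpha(w)=\sum_{i=1}^n c_i-\neg(w)$.

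The plan for that is to mirror the proof of Lemma~\ref{beta}, tracking the effect of each generator on $\neg$ as $w$ is built from the identity by ascents. The generators $s_1,\ldots,s_{n-1}$ only permute window entries, so they fix $\neg$; by the analysis in Lemma~\ref{beta} each $s_n$ ascent sends the residue-$(+n)$ entry $c_jN+n$ to the residue-$(-n)$ entry $(c_j+1)N-n$, raising $\neg$ by $1$; and I would claim each $s_0$ ascent sends the residue-$(-1)$ entry $c_jN-1$ to the residue-$(+1)$ entry $c_jN+1$, lowering $\neg$ by $1$ while fixing every $c_i$. Granting this, $\neg(w)=\beta(w)-\alpha(w)$, and Lemma~\ref{beta} yields $\alpha(w)=\beta(w)-\neg(w)=\sum_{i=1}^n c_i-\neg(w)$.

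The delicate point, which I expect to be the main obstacle, is the \emph{direction} of the $s_0$ move: a priori $s_0$ flips the residue-$(\pm1)$ entry either way, and one must rule out length-increasing flips $+1\mapsto-1$ on the windows that actually arise in a reduced word. I would bypass this by computing $\alpha$ through class inversions. Because $m(s_0,s_i)$ is even for every $i\neq 0$ (namely $4$ for $i=1$ and $2$ otherwise), the number of occurrences of $s_0$ in a reduced word for $w$ is independent of the word and equals the number of class inversions of $w$ whose reflection is conjugate to $s_0$. The off-diagonal terms $I_{i,j}$ with $j<i$ in \eqref{eq:Iij} record the type-$A$ (pair) inversions conjugate to $s_1,\ldots,s_{n-1}$ and so contribute to neither $\alpha$ nor $\beta$; the coordinate-wall inversions all sit in the diagonal terms $I_{i,i}=\floor{2w_i/N}$, which split according to whether the wall $w_i=k(n+1)$ has $w_i\equiv 0$ or $w_i\equiv n+1 \bmod N$. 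The even multiples $2w_i>2mN$ are the walls conjugate to $s_0$ and number $\floor{w_i/N}=c_i-\chi(\sigma_i<0)$, while the odd multiples recover $\beta$. Since $w_i\not\equiv 0,\,n+1 \bmod N$ no boundary case occurs, and summing over $i$ gives $\alpha(w)=\sum_{i=1}^n\big(c_i-\chi(\sigma_i<0)\big)=\sum_{i=1}^n c_i-\neg(w)$, as required.
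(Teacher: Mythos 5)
Your proposal is correct, and its decisive step takes a genuinely different route from the paper. The paper's proof of the first equality is exactly the bookkeeping argument you sketch and then set aside: it observes that every ascent-application of $s_n$ turns a positive residue mod $N$ into a negative one, every ascent-application of $s_0$ turns a negative residue into a positive one, the remaining generators preserve the count, and the identity has no negative residues, whence $\alpha(w)+\neg(w)=\beta(w)$; it then invokes Lemmas~\ref{neg} and~\ref{beta} exactly as you do for the second equality. The paper simply asserts the direction of the $s_0$ move that worries you; this can in fact be justified cheaply, since left multiplication by a simple reflection changes $\ell=\sum I_{i,j}$ by exactly $1$, the only class-inversion count affected by flipping the residue-$(\pm 1)$ entry is the diagonal term $\floor{2w_j/N}$, and that term increases precisely under $c_jN-1\mapsto c_jN+1$. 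Your bypass via conjugacy classes of reflections is a valid alternative: the computation $I_{i,i}=\floor{2w_i/N}$, its split into $\floor{w_i/N}=c_i-\chi(\sigma_i<0)$ walls at even multiples of $n+1$ (conjugate to $s_0$) plus $c_i$ walls at odd multiples (conjugate to $s_n$), and the resulting totals all check out and are consistent with Lemma~\ref{beta}. What it buys is robustness --- it never needs to analyze which direction a generator moves a window in a reduced word --- at the cost of importing two facts you assert without proof: that the number of occurrences of $s_0$ in any reduced word equals the number of inversions whose reflection lies in the conjugacy class of $s_0$ (true here because the bonds at $s_0$ and $s_n$ are both even, so $\{s_0\}$, $\{s_1,\ldots,s_{n-1}\}$, $\{s_n\}$ are the three classes), and the identification of the even-wall reflections as the ones conjugate to $s_0$. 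Either route is acceptable; the paper's is more elementary once the direction of the $s_0$ ascent is granted.
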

 \begin{proof}
The application of an $s_0$ generator as an ascent to a window takes the entry $w_j=c_jN-1$ which is equal to $-1$ modulo $N$ and replaces it by $w_j+2=c_jN+1$, equal to $+1$ modulo $N$.  In particular, the number of $s_0$ generators is the number of times a negative residue modulo $N$ has become a positive residue modulo $N$.  Furthermore, recognize that every application of an $s_n$ generator replaces one positive residue modulo $N$ by a negative residue modulo $N$.  Since the identity permutation has all residues positive, and the number of $s_0$ applications would need to equal the number of $s_n$ applications in order to have all residues positive, then 
$\alpha(w)+\neg(w)=\beta(w)$.  Applying Lemmas~\ref{neg} and \ref{beta} completes the proof.
 \end{proof}
 
Putting together Lemmas~\ref{beta} and \ref{alpha} gives the following.
\begin{observation} The statistic  $\abs{\ceil{\la}^{(1,2,\ldots,n)}}$ on a lecture hall partition equals $\alpha(w) + \beta(w)$ for the corresponding element of $\tldC_n/C_n$.
\end{observation}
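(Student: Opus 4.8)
The plan is to obtain this observation as an immediate consequence of the two preceding lemmas, by simple addition. Lemma~\ref{beta} identifies $\beta(w)$ with $\abs{\ceil{\la}^{(2,4,\ldots,2n)}}$, and Lemma~\ref{alpha} identifies $\alpha(w)$ with $\abs{\ceil{\la}^{(1,2,\ldots,n)}} - \abs{\ceil{\la}^{(2,4,\ldots,2n)}}$. So my first and only step is to add these two expressions and observe that the $\abs{\ceil{\la}^{(2,4,\ldots,2n)}}$ term appears once with a plus sign (from $\beta$) and once with a minus sign (from $\alpha$), hence cancels. Concretely,
\[
\alpha(w) + \beta(w) = \left(\abs{\ceil{\la}^{(1,2,\ldots,n)}} - \abs{\ceil{\la}^{(2,4,\ldots,2n)}}\right) + \abs{\ceil{\la}^{(2,4,\ldots,2n)}} = \abs{\ceil{\la}^{(1,2,\ldots,n)}},
\]
which is exactly the claimed identity.

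There is no genuine obstacle here: all of the substantive work has already been carried out in establishing Lemmas~\ref{neg}, \ref{beta}, and \ref{alpha}. In particular, the interpretation of $\beta(w)$ as the number of $s_n$ generators (equal to $\sum_i c_i$) and of $\alpha(w)$ as the number of $s_0$ generators, together with the balancing relation $\alpha(w) + \neg(w) = \beta(w)$ used in the proof of Lemma~\ref{alpha}, are precisely what make the cancellation transparent. The only thing I would be careful about is that both lemmas are stated under the correspondence $\la = (2c_1 - e_1, \ldots, 2nc_n - e_n)$ from Theorem~\ref{newCtoL}, so that the two ceiling quantities $\abs{\ceil{\la}^{(2,4,\ldots,2n)}}$ referred to in Lemmas~\ref{beta} and \ref{alpha} are literally the same expression evaluated on the same $\la$; this is guaranteed since both lemmas concern the single element $w \in \tldC_n/C_n$ and its image $\la \in L_n$.

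Because the statement is this direct, I would present it exactly as the paper does, as an \emph{Observation} whose one-line justification is ``Adding Lemmas~\ref{beta} and \ref{alpha} yields the result,'' rather than as a formal \texttt{proof} environment, since a separate proof block would add no content beyond the cancellation displayed above.
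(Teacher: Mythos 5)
Your proposal is correct and matches the paper exactly: the paper introduces this Observation with ``Putting together Lemmas~\ref{beta} and \ref{alpha} gives the following,'' which is precisely the one-line addition and cancellation of the $\abs{\ceil{\la}^{(2,4,\ldots,2n)}}$ terms that you carry out. No further comment is needed.
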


Other statistics have appeared in other related works.  In his thesis, Mongelli \cite{Mongelli} defines $\amaj(w)$ and $\ades(w)$ statistics on elements $w\in \tldC_n/C_n$.  (Mongelli's affine major statistic $\amaj$ is not the same as the Savage-Schuster ascent major statistic $\amaj$ presented later.)  Mongelli's statistics are equal to the statistics $\alpha(w)$ and $\beta(w)$, respectively.  Bousquet-M\'elou and Eriksson in \cite{BME3} define a statistic for the number of odd parts in $\ceil{\la}^{(1,2,\ldots,n)}$, which has a simple interpretation in terms of $\alpha(\la)$ and $\beta(\la)$.
 
 \begin{lemma}
\label{lem:odd}
$$ o(\lceil\la\rceil) = \neg(\sigma)= \beta(w)-\alpha(w).$$ 
\end{lemma}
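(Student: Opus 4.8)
The plan is to split the claimed chain into its two equalities and treat them separately, since the right-hand one is essentially free. For $\neg(\sigma) = \beta(w) - \alpha(w)$, I would simply invoke the three preceding lemmas: by Lemma~\ref{beta} we have $\beta(w) = \abs{\ceil{\la}^{(2,4,\ldots,2n)}}$, and by Lemma~\ref{alpha} we have $\alpha(w) = \abs{\ceil{\la}^{(1,2,\ldots,n)}} - \abs{\ceil{\la}^{(2,4,\ldots,2n)}}$, so that $\beta(w) - \alpha(w) = 2\abs{\ceil{\la}^{(2,4,\ldots,2n)}} - \abs{\ceil{\la}^{(1,2,\ldots,n)}}$, which is exactly $\neg(w) = \neg(\sigma)$ by Lemma~\ref{neg}. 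Thus this equality carries no new content beyond the earlier results.

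The substance of the lemma is the left-hand equality $o(\ceil{\la}) = \neg(\sigma)$, and the key step is a parity computation on each coordinate of $\ceil{\la}^{(1,2,\ldots,n)}$. Using $\la_i = 2ic_i - e_i$ from Theorem~\ref{newCtoL} together with $\lceil -x \rceil = -\floor{x}$, I would first record the per-coordinate identity
\[
\ceil{\la}_i^{(1,2,\ldots,n)} = \Big\lceil \frac{2ic_i - e_i}{i} \Big\rceil = 2c_i - \floor{\frac{e_i}{i}}.
\]
Since $e = \Psi(\sigma) \in \I_n^{(2,4,\ldots,2n)}$ forces $0 \leq e_i < 2i$, the quantity $\floor{e_i/i}$ takes only the values $0$ and $1$. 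Hence $\ceil{\la}_i^{(1,2,\ldots,n)}$ is odd precisely when $\floor{e_i/i} = 1$ and even precisely when $\floor{e_i/i} = 0$. Summing the indicator of oddness over $i$ therefore yields $o(\ceil{\la}) = \sum_{i=1}^n \floor{e_i/i}$, which equals $\neg(\sigma)$ by the identity $\neg(\sigma) = \sum_{i=1}^n \floor{e_i/i}$ already established in the proof of Lemma~\ref{neg}.

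There is no serious obstacle here; the only point requiring care is the bound $0 \leq e_i < 2i$, which is exactly what pins $\floor{e_i/i}$ to $\{0,1\}$ and makes the parity of $\ceil{\la}_i^{(1,2,\ldots,n)}$ coincide term-by-term with the indicator summed in $\neg(\sigma)$. Once the per-coordinate formula $\ceil{\la}_i^{(1,2,\ldots,n)} = 2c_i - \floor{e_i/i}$ is in hand, both equalities follow purely by bookkeeping against the earlier lemmas.
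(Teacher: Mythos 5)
Your proof is correct and follows exactly the route the paper intends: the paper states Lemma~\ref{lem:odd} without an explicit proof, and your argument supplies the details using precisely the ingredients already on the page --- Lemmas~\ref{beta} and~\ref{alpha} give $\beta(w)-\alpha(w)=\neg(w)$ immediately, and the identity $\neg(\sigma)=\sum_{i=1}^n\floor{e_i/i}$ from the proof of Lemma~\ref{neg} handles the left-hand equality. Your per-coordinate observation $\ceil{\la}_i^{(1,2,\ldots,n)}=2c_i-\floor{e_i/i}$ with $\floor{e_i/i}\in\{0,1\}$ (forced by $0\leq e_i<2i$) is exactly the parity computation implicit in Lemma~\ref{neg}'s proof, so nothing beyond careful bookkeeping is needed and there is no gap.
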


\subsection{The $\max$ and $\last$ statistics}\

\noindent
Two statistics pertaining to the largest element of $w\in\tldC_n/C_n$ (and therefore $\lam\in L_n$) are $\last$ and $\max$.  Define $\last(w)=c_n$, which corresponds to the statistic 
\(
\ceil{\frac{\la_n}{2n}}
\) under the bijection of Theorem~\ref{newCtoL}.  We also wish to define a statistic $\max$ on $w\in\tldC_n/C_n$ that corresponds to the statistic $\la_n$ on lecture hall partitions.  
\begin{lemma}
\label{lem:max}
Under the definition
 \[
 \max(w)= w_n-\floor{\frac{w_n}{n+1}}-n
 \]
we have $\max(w)=\la_n$ when $w$ is the element of the parabolic quotient $\tldC_n/C_n$ that corresponds to the lecture hall partition $\la$.
\end{lemma}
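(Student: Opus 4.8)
The plan is to substitute the window decomposition $w_n = c_n N + \sigma_n$ directly into the proposed formula for $\max(w)$ and reduce it to the known expression $\la_n = 2nc_n - e_n$, where $e = \Psi(\sigma)$. The essential arithmetic input is that $N = 2n+2 = 2(n+1)$, so dividing $w_n$ by $n+1$ separates the $c_n$ contribution cleanly from the $\sigma_n$ contribution.

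First I would write $\frac{w_n}{n+1} = \frac{c_n(2n+2) + \sigma_n}{n+1} = 2c_n + \frac{\sigma_n}{n+1}$, and observe that since $\sigma_n \in \{\pm 1, \ldots, \pm n\}$ we have $\frac{\sigma_n}{n+1} \in (-1,1)$, so that $\floor{\frac{w_n}{n+1}} = 2c_n + \floor{\frac{\sigma_n}{n+1}}$, where $\floor{\frac{\sigma_n}{n+1}} = 0$ when $\sigma_n > 0$ and $\floor{\frac{\sigma_n}{n+1}} = -1$ when $\sigma_n < 0$. Substituting yields $\max(w) = 2nc_n + \sigma_n - \floor{\frac{\sigma_n}{n+1}} - n$, so the lemma reduces to the single identity $e_n = n - \sigma_n + \floor{\frac{\sigma_n}{n+1}}$.

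The remaining step, which I expect to be the crux, is to evaluate $e_n$ from the definition of $\Psi$. Since $(\abs{\sigma_1}, \ldots, \abs{\sigma_n})$ is a permutation of $[n]$, exactly $n - \abs{\sigma_n}$ of the values $\abs{\sigma_j}$ exceed $\abs{\sigma_n}$, and none of these is the index $j=n$ itself; hence $e_n^* = n - \abs{\sigma_n}$. I would then split on the sign of $\sigma_n$ following the piecewise definition of $\Psi$. When $\sigma_n > 0$ we get $e_n = e_n^* = n - \sigma_n$, which matches the target since $\floor{\frac{\sigma_n}{n+1}} = 0$; when $\sigma_n < 0$ we get $e_n = 2n - 1 - e_n^* = n - 1 + \abs{\sigma_n} = n - 1 - \sigma_n$, which matches the target since $\floor{\frac{\sigma_n}{n+1}} = -1$. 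In both cases the reduced identity holds, proving $\max(w) = \la_n$. The main obstacle is purely the bookkeeping of $e_n^*$ against $\abs{\sigma_n}$; there is no structural difficulty, and the two sign cases reassemble neatly into the single floor expression $\floor{\frac{\sigma_n}{n+1}}$.
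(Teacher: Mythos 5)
Your proof is correct, but it takes a genuinely different route from the paper. The paper proves this lemma by appealing to the abacus model of Bradford et al.\ (built on Hanusa--Jones and the Eriksson--Eriksson/Bj\"orner--Brenti representations), where the largest part of the lecture hall partition is known to be the value of the highest-numbered bead; the proof then consists of reconciling two conventions (their period $N=2n$ versus the paper's $N=2n+2$, whence the $-\floor{w_n/(n+1)}$ rescaling, and the shift of identity element, whence the $-n$). Your argument instead stays entirely inside the machinery of Section~2: you substitute $w_n=c_nN+\sigma_n$ into the formula, use $N=2(n+1)$ to extract $\floor{w_n/(n+1)}=2c_n+\floor{\sigma_n/(n+1)}$, and reduce the claim to the identity $e_n=n-\sigma_n+\floor{\sigma_n/(n+1)}$, which you verify directly from the definition of $\Psi$ via $e_n^*=n-\abs{\sigma_n}$ and the two sign cases. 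Each step checks out: $\floor{\sigma_n/(n+1)}$ is $0$ or $-1$ according to the sign of $\sigma_n$, and both cases match $\la_n=2nc_n-e_n$ from Theorem~\ref{newCtoL}. What your approach buys is a self-contained, purely computational proof requiring no external references or convention translation; what the paper's approach buys is an explanation of \emph{where} the formula comes from (the abacus picture), which your calculation verifies but does not motivate.
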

\begin{proof}
Bradford et al \cite{Brant} approach lecture hall partitions through the use of the abacus model developed by Hanusa and Jones \cite{HJ12}, which in turn is a visualization of the work of \cite{EE} and \cite{BB} on representations of affine Coxeter groups permutations of $\mathbb{Z}$.  Under the conventions of \cite{Brant}, they prove that the value of the highest-numbered bead is exactly the largest part of the corresponding lecture hall partition.  

In order to convert our window notation to their set of lowest beads, we must take into account two differences in convention.  First, they choose $N=2n$ while we choose $N=2n+2$ by omitting runners $0$ and $n+1$.  Because of this, we must rescale every entry in the window by the ratio $\frac{n}{n+1}$ by subtracting $\big\lfloor{\frac{w_n}{n+1}}\big\rfloor$.  Second, our identity element is $[1,2,\hdots n]$ while their identity element chooses lowest beads $\{ -n+1,-n+2,\hdots 0\}$, which requires an additional adjustment of $-n$.  
\end{proof}

A related observation will be needed in Section~\ref{sec:smaller}.
\begin{observation}
	\label{ob:max}
   A lecture hall partition with largest part $\la_n=2tn$ corresponds to an element of the parabolic quotient $\tldC_n/C_n$ with largest entry $w_n=(n+1)(2t+1)$.	
\end{observation}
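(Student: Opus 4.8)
The plan is to read the observation directly off Lemma~\ref{lem:max}, which already expresses the largest part $\la_n$ as a function of the top window entry $w_n$. That lemma gives
\[
\la_n = \max(w) = w_n - \floor{\frac{w_n}{n+1}} - n,
\]
so the task reduces to inverting this relation: given the hypothesis $\la_n = 2tn$, I must identify the value of $w_n$ that produces it.

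First I would note that the right-hand side is piecewise linear in $w_n$, with the only nonlinearity coming from the floor term. It is therefore cleanest to locate the solution at which $w_n$ is an exact multiple of $n+1$, since there the floor is taken without remainder. Writing $w_n = m(n+1)$ makes $\floor{w_n/(n+1)} = m$, and the formula collapses to $\la_n = m(n+1) - m - n = (m-1)n$. Matching this against the hypothesis $\la_n = 2tn$ forces $m-1 = 2t$, that is $m = 2t+1$, and hence $w_n = (2t+1)(n+1)$, which is exactly the claimed value.

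The remaining step is a direct verification that this $w_n$ does return $\la_n = 2tn$ under Lemma~\ref{lem:max}. Since $(2t+1)(n+1)$ is a multiple of $n+1$, we have $\floor{w_n/(n+1)} = 2t+1$, and substituting back gives
\[
\la_n = (2t+1)(n+1) - (2t+1) - n = (2t+1)n - n = 2tn,
\]
as required. I expect the only delicate point to be the inversion of the floor relation in the first step, namely recognizing that the hypothesis $\la_n = 2tn$ is precisely an even multiple of $n$ and so pairs naturally with a value of $w_n$ lying on the multiple-of-$(n+1)$ locus; once $w_n = (2t+1)(n+1)$ is in hand, the verification through Lemma~\ref{lem:max} is immediate and closes the argument.
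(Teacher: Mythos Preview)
Your arithmetic in both steps is correct, but the inversion has a genuine gap. The map $w_n \mapsto w_n - \lfloor w_n/(n+1)\rfloor - n$ from Lemma~\ref{lem:max} is not injective: it is flat exactly at the multiples of $n+1$, so \emph{both} $w_n=(2t+1)(n+1)$ and $w_n=(2t+1)(n+1)-1$ are sent to $2tn$. Your ansatz ``take $w_n$ a multiple of $n+1$'' therefore does not pin down the answer; worse, it selects precisely the value that never occurs, since every window entry satisfies $w_i\not\equiv 0\pmod{n+1}$ (the paper notes $w(i)=i$ for all $i\equiv 0\bmod(n+1)$, so those residues are already occupied). If you run the bijection of Theorem~\ref{newCtoL} directly, $\la_n=2tn$ forces $c_n=\lceil \la_n/(2n)\rceil=t$ and $e_n=0$, hence $\sigma_n=n$ and $w_n=tN+n=(2t+1)(n+1)-1$.

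The paper gives no proof of this observation; it is meant as an informal boundary statement, and its only use is to define $S_{n,t}$ by the inequality $w_n\le (2t+1)(n+1)$. What actually needs checking for that application is the equivalence $\la_n\le 2tn \iff w_n\le (2t+1)(n+1)$, and this holds (both sides are equivalent to $c_n\le t$), precisely because the boundary value $(2t+1)(n+1)$ is never attained. Your verification step alone does not establish this equivalence, and your inversion step, as written, would need the non-attainability remark to be complete.
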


\subsection{The $\amaj$ and $\lhp$ statistics}\

\noindent
The following statistics on inversion sequences allow us to relate them to lecture hall partitions via Ehrhart theory.

We have already defined $\Asc^{(\s)}(e)$ and $\asc^{(\s)}(e)$ for $e \in \I_n^{(\s)}$.  We use two other statistics from \cite{SS} defined for a sequence $\s$: the ascent major index
$\amaj$, which is like a comajor index for the $\asc$ statistic, and the $\lhp$ statistic, which is inherited from the weight of a lecture hall partition. 
They are defined as:
\[
\amaj^{(\s)}(e) = \sum_{i \in \Asc^{(\s)}(e)} (n-i);
\]
\[
\lhp^{(\s)}(e) =  - \abs{e} + \sum_{i \in \Asc^{(\s)}(e)} (s_{i+1} + \ldots + s_n).
\]

Let $\comaj$ and $\lhp_C$ denote the type C version of the statistics, i.e.,  $\amaj^{(2,4, \ldots, 2n)}$ and 
$\lhp^{(2,4, \ldots, 2n)}$, respectively.  Then for $\sigma \in C_n$,
\[
\comaj(\sigma) = \sum_{i \in \Des(\sigma)} (n-i);
\]
and
\[
\lhp_C(\sigma) =  -\inv_C(\sigma) + \sum_{i \in \Des(\sigma)} (2(i+1) + \ldots + 2n).
\]

The statistic  $\lhp_C$ is {\em quadratic} in the sense that  its summands are quadratic functions of descent positions $i$.  In contrast, the summands of $\comaj$   are linear functions of descent positions $i$;  for the statistic $\des$, the summands are constant functions of $i$.

We will use these corresponding statistics on signed permutations to translate results from lecture hall partitions and inversion sequences to $C_n$ and $\tldC_n/C_n$.  We show in Section 4 that  $\lhp_C$ is the natural statistic to define  on $C_n$ so that the distribution of $\lhp_C$ over $C_n$ inherits a nice product-form generating function from Bott's formula for $\tldC_n/C_n$.  The proof relies on an Ehrhart theory result for lecture hall partitions together with the bijection $\tldC_n/C_n \rightarrow L_n$.

\newpage
\section{Translating results about lecture hall partitions into results about $\tldC_n/C_n$}
\label{sec:results}

We can use this new insight on the bijection to translate results between lecture hall partitions and elements of the parabolic quotient $\tldC_n/C_n$.
We use the standard $q$-notation, defining the $q$-bracket 
\[[k]_q=(1+q+\cdots+q^{k-1}),\] 
the $q$-factorial 
\[[k]_q!=\prod_{i=1}^k[i]_q,\] 
the $q$-binomial coefficients   
\[\qbinom{n}{k}_q=
\frac{[n]_q!}{[k]_q![n-k]_q!},\] and the $q$-Pochhammer symbols \[(a;q)_n=\prod_{k=0}^{n-1}(1-aq^k).\]

\subsection{Truncated coset representatives and Bott's Formula}\

Bousquet-M\'elou  and Eriksson  observed in  \cite{BME} that Bott's formula  \cite{Bott} for the Poincare series of  $\tldC_n$ is equivalent to:
\[
\sum_{w \in \tldC_n/C_n } q^{\ell(w)} = \prod_{i=1}^n \frac{1}{1-q^{2i-1}}.
\]
As noted earlier, $\ell(w) = \inv_{\tldC}(w).$  They used Bott's formula and their correspondence between lecture hall partitions and class inversion vectors for elements of $\tldC_n/C_n$ to give the first proof of the
{\em lecture hall theorem}:
\begin{theorem}[Lecture Hall Theorem, \cite{BME}]
\begin{equation*}\label{eq:lht}
\sum_{\la \in L_n} q^{|\la|} = \prod_{i=1}^n \frac{1}{1-q^{2i-1}}.
\end{equation*}
\end{theorem}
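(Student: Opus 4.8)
The plan is to deduce the Lecture Hall Theorem directly from Bott's formula by transporting the generating function across the weight-preserving bijection already established in Theorem~\ref{newCtoL}. All the substantive work has been done in Sections~\ref{sec:proof} and \ref{sec:stats}: we have a bijection $\tldC_n/C_n \to L_n$ sending $w$ to $\la$, and Theorem~\ref{newCtoL} guarantees that this bijection satisfies $\abs{\la} = \inv_{\tldC}(w)$. Since $\inv_{\tldC}(w) = \ell(w)$ by \cite[Proposition~8.4.1]{BB}, the bijection is weight-preserving in exactly the sense needed to equate the two generating functions.

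Concretely, I would begin by writing Bott's formula in the form stated above,
\[
\sum_{w \in \tldC_n/C_n} q^{\ell(w)} = \prod_{i=1}^n \frac{1}{1-q^{2i-1}},
\]
and then rewrite the exponent $\ell(w)$ as $\inv_{\tldC}(w)$. The next step is to apply the bijection of Theorem~\ref{newCtoL} as a change of summation variable. Because that bijection is a bijection of sets $\tldC_n/C_n \to L_n$ under which $\inv_{\tldC}(w) = \abs{\la}$, each term $q^{\inv_{\tldC}(w)}$ in the left-hand sum is carried to exactly one term $q^{\abs{\la}}$, and every $\la \in L_n$ is hit exactly once. Hence
\[
\sum_{\la \in L_n} q^{\abs{\la}} = \sum_{w \in \tldC_n/C_n} q^{\inv_{\tldC}(w)} = \prod_{i=1}^n \frac{1}{1-q^{2i-1}},
\]
which is the desired identity.

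I do not expect any genuine obstacle in this final step, since it is a pure transport of structure: the entire difficulty of the argument is front-loaded into the construction and verification of the bijection and the statistic identity $\abs{\la} = \inv_{\tldC}(w)$, both of which we may assume from Theorem~\ref{newCtoL}. The only point requiring care is the logical dependence: one must take Bott's formula as an \emph{input}, so this proof establishes the Lecture Hall Theorem as a \emph{consequence} of Bott's formula rather than independently of it. This matches the historical route of Bousquet-M\'elou and Eriksson, who derived the Lecture Hall Theorem precisely by combining Bott's formula with their correspondence, and the new view of that correspondence developed here simply makes the weight-preserving property transparent.
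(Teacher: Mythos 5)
Your proposal is correct and follows exactly the route the paper takes: the Lecture Hall Theorem is obtained by combining Bott's formula for $\sum_{w \in \tldC_n/C_n} q^{\ell(w)}$ with the weight-preserving bijection of Theorem~\ref{newCtoL} (which guarantees $\abs{\la} = \inv_{\tldC}(w) = \ell(w)$). This is precisely the derivation the paper attributes to Bousquet-M\'elou and Eriksson and re-establishes via its new view of the correspondence, so there is nothing to add.
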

In a subsequent paper \cite{BME3}, Bousquet-M\'elou and Eriksson proved a refinement of the Lecture Hall Theorem corresponding to a refinement of Bott's formula.  Following their notation,  let
\[
\tldC_n/C_n (q,a,b) = \sum_{w \in \tldC_n/C_n } q^{\inv_{\tldC}(w)}a^{\alpha(w)}b^{\beta(w)}.
\] 
They noted that
\begin{equation}
\label{eq:Bott}
\tldC_n/C_n (q,a,b) = \prod_{i=1}^n  \frac{
1+bq^i}
{1-abq^{n+i}}.
\end{equation}
follows from a refinement of Bott's formula for the Poincar{\'e} series of $\tldC_n$, due to Macdonald, and is proved combinatorially by Eriksson and Eriksson in \cite{EE}.  Bousquet-M\'elou and Eriksson proved the following as a consequence of Equation~\eqref{eq:Bott} and also gave an independent combinatorial proof.
\begin{theorem}[\cite{BME3}]
\label{refined}
\[
\sum_{\la \in L_n} q^{\abs{\la}}u^{\abs{\ceil{\la}}}v^{o(\ceil{\la})} =  \prod_{i=1}^n  \frac{1+uvq^i}{1-u^2q^{n+i}}.
\]
\end{theorem}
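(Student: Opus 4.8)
The plan is to transport the entire generating function through the bijection of Theorem~\ref{newCtoL} and then recognize the result as a specialization of the refined Bott formula in Equation~\eqref{eq:Bott}. First I would use the bijection $\la \mapsto w$ to rewrite the left-hand sum over $L_n$ as a sum over $w \in \tldC_n/C_n$. The task then reduces to re-expressing the three exponents $\abs{\la}$, $\abs{\ceil{\la}}$, and $o(\ceil{\la})$ in terms of the statistics $\inv_{\tldC}(w)$, $\alpha(w)$, and $\beta(w)$ assembled in Section~\ref{sec:stats}. (Here I would first note that the unadorned $\ceil{\la}$ in the statement means the ordinary ceiling $\ceil{\la}^{(1,2,\ldots,n)}$, consistent with the lecture hall condition on the ratios $\la_i/i$.)

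For the three translations I would invoke, in order: (i) $\inv_{\tldC}(w) = \abs{\la}$ from Theorem~\ref{newCtoL}; (ii) $\abs{\ceil{\la}^{(1,2,\ldots,n)}} = \alpha(w) + \beta(w)$ from the Observation following Lemma~\ref{alpha}; and (iii) $o(\ceil{\la}) = \beta(w) - \alpha(w)$ from Lemma~\ref{lem:odd}. Substituting these, the summand becomes
\[
q^{\abs{\la}} u^{\abs{\ceil{\la}}} v^{o(\ceil{\la})} = q^{\inv_{\tldC}(w)} \, u^{\alpha(w)+\beta(w)} \, v^{\beta(w)-\alpha(w)} = q^{\inv_{\tldC}(w)} \, (u/v)^{\alpha(w)} \, (uv)^{\beta(w)},
\]
so the left-hand side is exactly $\tldC_n/C_n(q, u/v, uv)$ in the notation preceding Equation~\eqref{eq:Bott}. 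Finally I would apply Equation~\eqref{eq:Bott} with $a = u/v$ and $b = uv$; since $ab = u^2$ and $b = uv$, the product collapses to
\[
\prod_{i=1}^n \frac{1 + uv\,q^i}{1 - u^2 q^{n+i}},
\]
which is precisely the claimed right-hand side.

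There is no substantial obstacle remaining: all of the difficulty has been front-loaded into the bijection and the statistic dictionary of Section~\ref{sec:stats}, and the refined Bott formula~\eqref{eq:Bott} is taken as given. The only point demanding genuine care is verifying that $(a,b) = (u/v, uv)$ is the correct change of variables converting $a^{\alpha}b^{\beta}$ into $u^{\abs{\ceil{\la}}} v^{o(\ceil{\la})}$; this is forced by the two linear relations $\abs{\ceil{\la}} = \alpha + \beta$ and $o(\ceil{\la}) = \beta - \alpha$, which invert to $\alpha = (\abs{\ceil{\la}} - o(\ceil{\la}))/2$ and $\beta = (\abs{\ceil{\la}} + o(\ceil{\la}))/2$, confirming the substitution is well defined and unique.
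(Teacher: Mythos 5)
Your proposal is correct and follows essentially the route the paper itself indicates: the theorem is presented as a consequence of the refined Bott formula~\eqref{eq:Bott} transported through the bijection of Theorem~\ref{newCtoL}, using exactly the dictionary $\abs{\ceil{\la}} = \alpha(w)+\beta(w)$ and $o(\ceil{\la}) = \beta(w)-\alpha(w)$ from Section~\ref{sec:stats}, and your substitution $(a,b)=(u/v,\,uv)$ is the right one. No gaps.
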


We can use our correspondence from Section~\ref{sec:proof} to give a formula for a refinement of Bott's formula.  The so-called truncated lecture hall partitions $L_{n,k}$ are those in which  a specified number of parts are required to be zero:
\[
L_{n,k} = \left\{(\lam_1,\hdots,\lam_n) \in \integers^n: 0 < \frac{\lam_{n-k+1}}{n-k+1} \leq \frac{\lam_{n-k+2}}{n-k+2}\leq \cdots \leq \frac{\lam_n}{n}\right\}.
\]

\begin{lemma}
Under the bijection of Section~\ref{sec:proof}, the truncated lecture hall partitions $L_{n,k}$ correspond to the set $T_{n,k}$ of truncated minimal length coset representatives of $\tldC_n/C_n$ defined by
\[
T_{n,k} = \left \{w=[w_1,\hdots,w_n] \in \tldC_n/C_n \ : \ w_{n-k} \leq n \textup{ and } w_{n-k+1}>n \right \}.
\]
\end{lemma}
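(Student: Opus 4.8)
The plan is to read everything off the explicit bijection of Theorem~\ref{newCtoL} and to translate the ``first $n-k$ parts vanish'' description of $L_{n,k}$ into a threshold condition on the window of $w$. Recall that under that bijection $\la_i = 2ic_i - e_i$ with $0 \le e_i < 2i$, which forces $c_i = \ceil{\la_i/(2i)}$. The first reduction I would make is the observation that $\la_i = 0$ exactly when $c_i = 0$ (in which case $e_i = 0$ as well): if $c_i = 0$ then $\la_i = -e_i \le 0$, so $\la_i = 0$, while if $c_i \ge 1$ then $\la_i \ge 2i - (2i-1) = 1 > 0$. Consequently membership in $L_{n,k}$, that is $\la_1 = \cdots = \la_{n-k} = 0 < \la_{n-k+1}$, is equivalent to $c_1 = \cdots = c_{n-k} = 0$ together with $c_{n-k+1} \ge 1$.

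The second step is to convert the vanishing of $c_i$ into a statement about the window value $w_i = c_iN + \sigma_i$. Here I would invoke the defining features of a coset representative: the entries $w_i$ are positive, $\abs{\sigma_i} \le n$, and (since the residues $0$ and $n+1$ modulo $N$ do not occur) no $w_i$ equals $n+1$. If $c_i = 0$ then $w_i = \sigma_i$, and positivity forces $1 \le \sigma_i \le n$, whence $w_i \le n$; if $c_i \ge 1$ then $w_i \ge N - n = n+2 > n$. Thus $c_i = 0 \iff w_i \le n$, the omitted residue $n+1$ guaranteeing the clean dichotomy $w_i \le n$ versus $w_i \ge n+2$.

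Combining the two steps, $\la \in L_{n,k}$ is equivalent to $w_1, \ldots, w_{n-k} \le n$ and $w_{n-k+1} > n$. Since $(c_i)$ is nondecreasing, equivalently the window is increasing, the whole chain of conditions collapses to the two boundary conditions $w_{n-k} \le n$ and $w_{n-k+1} > n$ (with the conventions $c_0 = 0$ and $w_0 = 0$ covering the extreme case $k = n$), which is precisely the defining condition of $T_{n,k}$. As the map of Theorem~\ref{newCtoL} is already a bijection $\tldC_n/C_n \to L_n$, restricting it to these matched subsets yields the asserted bijection between $T_{n,k}$ and $L_{n,k}$.

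The only genuinely delicate point is the middle step: one must use that coset representatives of $\tldC_n/C_n$ omit the residue $n+1$ modulo $N$ in order to rule out a window value landing exactly on $n+1$ and thereby secure the equivalence $c_i = 0 \iff w_i \le n$. Everything else is bookkeeping on the identity $\la_i = 2ic_i - e_i$ and on the monotonicity of the sequence $c$.
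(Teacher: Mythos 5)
Your proposal is correct and follows essentially the same route as the paper's (much terser) proof: both reduce membership in $L_{n,k}$ to the condition $c_1=\cdots=c_{n-k}=0$ and $c_{n-k+1}>0$ via the identity $\la_i=2ic_i-e_i$ with $0\le e_i<2i$, and then use the monotonicity of the window to convert this into the threshold conditions $w_{n-k}\le n$ and $w_{n-k+1}>n$. You simply make explicit the two equivalences ($\la_i=0\iff c_i=0$ and $c_i=0\iff w_i\le n$) that the paper leaves to the reader.
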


\begin{proof}
Under the bijection of Section~\ref{sec:proof}, the elements of $T_{n,k}$ are of the form 
\([c_1N+\sigma_1, \ldots, c_nN+\sigma_n]\) where \(c_1 = c_2 = \cdots = c_{n-k} = 0\) and $c_{n-k+1}>0$.  Since the elements of the window are in increasing order, the lemma follows.
\end{proof}

The truncated lecture hall partitions were enumerated in \cite{CS} by the statistics $|\la|$, $\big\lvert\lceil\la\rceil\big\rvert$, and $o(\lceil\la\rceil)$.  Using our thesaurus from Section~\ref{sec:stats}, we have the following refinement of Bott's formula to truncated minimal length coset representatives of $\tldC_n/C_n$.

\begin{theorem}
\[
\sum_{w \in T_{n,k}} q^{\inv_{\tldC}(w)} a^{\alpha(w)}b^{\beta(w)}= b^k q^{k+1 \choose 2} \qbinom{n}{k}_q 
\frac{(-aq^{n-k+1};q)_k}{(abq^{2n-k+1};q)_k}
\]
\end{theorem}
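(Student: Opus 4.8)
The plan is to transport the entire sum from $\tldC_n/C_n$ to lecture hall partitions through the bijection of Section~\ref{sec:proof}, apply the known enumeration of truncated lecture hall partitions from \cite{CS}, and then specialize the tracking variables so that the lecture-hall statistics recombine into $\alpha$ and $\beta$. By the lemma preceding this theorem, the bijection $w \mapsto \la$ of Theorem~\ref{newCtoL} restricts to a bijection $T_{n,k}\row L_{n,k}$, so it suffices to rewrite $q^{\inv_{\tldC}(w)}a^{\alpha(w)}b^{\beta(w)}$ as a monomial in the lecture-hall statistics $\abs{\la}$, $\abs{\ceil{\la}}$, and $o(\ceil{\la})$ that are tracked in \cite{CS}.

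First I would record the three statistic identities already established in Section~\ref{sec:stats}: $\inv_{\tldC}(w)=\abs{\la}$, together with $\abs{\ceil{\la}^{(1,2,\ldots,n)}}=\alpha(w)+\beta(w)$ (the Observation following Lemma~\ref{alpha}) and $o(\ceil{\la})=\beta(w)-\alpha(w)$ (Lemma~\ref{lem:odd}). Solving the last two linear relations gives $\alpha(w)=\tfrac12\big(\abs{\ceil{\la}}-o(\ceil{\la})\big)$ and $\beta(w)=\tfrac12\big(\abs{\ceil{\la}}+o(\ceil{\la})\big)$, where $\ceil{\la}$ abbreviates $\ceil{\la}^{(1,2,\ldots,n)}$. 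Hence $a^{\alpha(w)}b^{\beta(w)}=a^{(\abs{\ceil{\la}}-o(\ceil{\la}))/2}\,b^{(\abs{\ceil{\la}}+o(\ceil{\la}))/2}$, and the sum over $T_{n,k}$ becomes $\sum_{\la\in L_{n,k}}q^{\abs{\la}}y^{\abs{\ceil{\la}}}z^{o(\ceil{\la})}$ under the substitution $y=\sqrt{ab}$, $z=\sqrt{b/a}$. The one point needing care is that these square roots cancel term by term: since $\abs{\ceil{\la}}=\sum_i\ceil{\la_i/i}$ has the same parity as its number of odd entries $o(\ceil{\la})$, both exponents $(\abs{\ceil{\la}}\pm o(\ceil{\la}))/2$ are integers, so each summand lands in $\bbZ[a,b]$ and the identity of power series is legitimate.

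Next I would invoke \cite{CS}, whose enumeration of $L_{n,k}$ by these three statistics reads, in the present notation,
\[
\sum_{\la\in L_{n,k}}q^{\abs{\la}}y^{\abs{\ceil{\la}}}z^{o(\ceil{\la})}
=(yz)^k\,q^{\binom{k+1}{2}}\qbinom{n}{k}_q\,\frac{(-yz^{-1}q^{n-k+1};q)_k}{(y^2q^{2n-k+1};q)_k}.
\]
Substituting $y=\sqrt{ab}$ and $z=\sqrt{b/a}$ and using $yz=b$, $yz^{-1}=a$, and $y^2=ab$ collapses the right-hand side to $b^kq^{\binom{k+1}{2}}\qbinom{n}{k}_q\frac{(-aq^{n-k+1};q)_k}{(abq^{2n-k+1};q)_k}$, which is the claimed formula.

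The main obstacle is bookkeeping rather than conceptual: I must match the variable conventions of \cite{CS} to those used here and confirm that the specialization of each $q$-Pochhammer argument ($-yz^{-1}q^{n-k+1}\mapsto-aq^{n-k+1}$ and $y^2q^{2n-k+1}\mapsto abq^{2n-k+1}$) and of the prefactor $(yz)^k\mapsto b^k$ is exactly as claimed, together with the parity argument that guarantees the square-root substitution is well defined. Once these are verified nothing further is required, since the statistic identities and the correspondence $T_{n,k}\row L_{n,k}$ are already in hand.
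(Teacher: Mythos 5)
Your proposal is correct and follows essentially the same route as the paper: restrict the bijection to $T_{n,k}\rightarrow L_{n,k}$ via the preceding lemma, translate $\inv_{\tldC}(w)$, $\alpha(w)$, $\beta(w)$ into $\abs{\la}$, $\abs{\ceil{\la}}$, and $o(\ceil{\la})$ using the thesaurus of Section~\ref{sec:stats}, and substitute into the enumeration of truncated lecture hall partitions from \cite{CS}. The paper leaves these substitutions implicit, while you spell them out (including the parity observation that makes the $\sqrt{ab}$, $\sqrt{b/a}$ specialization land in $\bbZ[a,b]$), which adds detail but not a different argument.
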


\subsection{Odd and even window entries}\

In \cite{BME}, Bousquet-M\'elou and Eriksson gave a proof of the Lecture Hall Theorem independent of Bott's formula.
It involved counting separately the weights 
\[
|\la|_o= \la_n + \la_{n-2} + \ldots
\]
and 
\[
|\la|_e= \la_{n-1} + \la_{n-3} + \ldots
\]
of a lecture hall partition $\la \in L_n$.  Indeed, they proved the following refinement of the lecture hall theorem:
\[
\sum_{\la \in L_n}  x^{|\la|_o}y^{|\la|_e} =  \prod_{i=1}^n \frac{1}{1-x^iy^{i-1}}.
\]
Using the correspondence between $L_n$ and $\tldC_n/C_n$ we can use this to state a different refinenent of Bott's formula in which, for $1 \leq i \leq n$, the class inversions with $w_i$  for  $w=[w_1, \ldots, w_n] \in \tldC_n/C_n$ are counted separately, depending on whether $i \in \{n, n-2, \ldots\}$ or $i \in \{n-1, n-3, \ldots \}$.

For $w \in \tldC_n/C_n$, let $$|w|_o= I_n(w) + I_{n-2}(w) + \ldots$$ and let  $$|w|_e= I_{n-1}(w) + I_{n-3}(w) + \ldots.$$
The correspondence gives us the following.
\begin{theorem}
\[
\sum_{w \in \tldC_n/C_n} x^{|w|_o}y^{|w|_e} = \prod_{i=1}^n \frac{1}{1-x^iy^{i-1}}.
\]
\end{theorem}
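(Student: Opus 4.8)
The plan is to transport the refinement of the lecture hall theorem recorded immediately above—namely $\sum_{\la \in L_n} x^{|\la|_o}y^{|\la|_e} = \prod_{i=1}^n (1-x^iy^{i-1})^{-1}$—directly through the bijection of Theorem~\ref{newCtoL}. The one piece of structure I need is that this bijection matches the class inversion vector entry-for-entry with the parts of the lecture hall partition: Theorem~\ref{newCtoL} asserts that under the correspondence $w \leftrightarrow \la$ we have $\la_i = I_i(w)$ for each $1 \leq i \leq n$.

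First I would simply unwind the definitions of the four statistics. Since $|w|_o = I_n(w) + I_{n-2}(w) + \cdots$ sums precisely those entries of the class inversion vector whose index has the same parity as $n$, substituting $\la_i = I_i(w)$ gives $|w|_o = \la_n + \la_{n-2} + \cdots = |\la|_o$. The identical substitution gives $|w|_e = \la_{n-1} + \la_{n-3} + \cdots = |\la|_e$. Thus the bijection $\tldC_n/C_n \to L_n$ carries the pair of statistics $(|w|_o, |w|_e)$ to the pair $(|\la|_o, |\la|_e)$ verbatim.

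With this correspondence in hand, summing over the two sets gives equal generating functions,
\[
\sum_{w \in \tldC_n/C_n} x^{|w|_o}y^{|w|_e} = \sum_{\la \in L_n} x^{|\la|_o}y^{|\la|_e},
\]
and the right-hand side is then evaluated by the Bousquet-M\'elou--Eriksson refinement stated just before the theorem, yielding $\prod_{i=1}^n (1-x^iy^{i-1})^{-1}$.

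I do not expect any genuine obstacle: the entire content is the entry-wise identity $\la_i = I_i(w)$ from Theorem~\ref{newCtoL}, which guarantees that any statistic defined as a sum of selected class-inversion-vector entries translates into the same sum of selected parts of $\la$. The only point worth verifying explicitly is that the index sets appearing in the definitions of $|w|_o, |w|_e$ and of $|\la|_o, |\la|_e$ coincide, which they do by construction, since both definitions partition $\{1, \ldots, n\}$ by parity relative to $n$.
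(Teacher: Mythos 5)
Your proposal is correct and is precisely the paper's argument: the paper derives this theorem by noting that the bijection of Theorem~\ref{newCtoL} satisfies $\la_i = I_i(w)$, so the pair $(|w|_o,|w|_e)$ transports to $(|\la|_o,|\la|_e)$ and the Bousquet-M\'elou--Eriksson refinement of the lecture hall theorem gives the product formula. You have simply written out the entry-wise bookkeeping that the paper leaves implicit.
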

As in the preceding subsection, our tools allow us to exhibit a new further refinement of Bott's formula to truncated lecture hall partitions by reinterpreting Theorem~3 from \cite{CS}.  The corresponding result for $\tldC_n/C_n$ becomes:
\begin{theorem} For $n\geq k\geq 0$,
	\[\sum_{w \in T_{n,k}} x^{|w|_o}y^{|w|_e}=\frac{\big(x^{\lfloor k/2\rfloor+1}y^{\lceil k/2\rceil}\big)\displaystyle\qbinom{n-\lceil k/2\rceil}{\lfloor k/2\rfloor}_{xy}}{(x;xy)_{\lceil k/2 \rceil}\big(x^ny^{n-1};(xy)^{-1}\big)_{\lfloor k/2 \rfloor}}.\]
\end{theorem}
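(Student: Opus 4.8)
The plan is to translate the left-hand sum into a sum over truncated lecture hall partitions and then invoke Theorem~3 of \cite{CS}, following exactly the template of the preceding subsection. First I would verify that the statistics match under the bijection $w \leftrightarrow \la$ of Theorem~\ref{newCtoL}. Since that bijection satisfies $\la_i = I_i(w)$ for $1 \leq i \leq n$, and since $|w|_o$ and $|w|_e$ were defined as the partial sums $I_n(w)+I_{n-2}(w)+\cdots$ and $I_{n-1}(w)+I_{n-3}(w)+\cdots$ of the class inversion vector, we get $|w|_o = \la_n + \la_{n-2} + \cdots = |\la|_o$ and likewise $|w|_e = |\la|_e$. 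Thus $x^{|w|_o} y^{|w|_e} = x^{|\la|_o} y^{|\la|_e}$ for each $w$ and its image $\la$.

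Next I would appeal to the lemma of the preceding subsection, which shows that the bijection of Section~\ref{sec:proof} restricts to a bijection $T_{n,k} \to L_{n,k}$. Combining this with the term-by-term identity of statistics gives
\[
\sum_{w \in T_{n,k}} x^{|w|_o} y^{|w|_e} = \sum_{\la \in L_{n,k}} x^{|\la|_o} y^{|\la|_e},
\]
so the left side is now expressed entirely in terms of lecture hall partitions. Theorem~3 of \cite{CS} evaluates this latter sum in closed product form, and that evaluation is precisely the right-hand side of the stated theorem; substituting completes the argument.

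The computational content here is minimal, since the product evaluation is supplied by \cite{CS}; the only point requiring care is the bookkeeping of the odd- and even-indexed partial sums. The hard part will be confirming that the statistics $|\la|_o$ and $|\la|_e$ as defined in \cite{CS} agree with the sums $\la_n + \la_{n-2} + \cdots$ and $\la_{n-1} + \la_{n-3} + \cdots$ used here, and that the resulting split of $k$ into $\lfloor k/2\rfloor$ and $\lceil k/2\rceil$ — reflecting how many forced-zero parts fall into each parity class when $\la_1 = \cdots = \la_{n-k} = 0$ — matches the exponents $x^{\lfloor k/2\rfloor+1}y^{\lceil k/2\rceil}$ and the $q$-binomial and Pochhammer indices on the right-hand side. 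Once these conventions are reconciled, the stated identity is an immediate transcription of Theorem~3 of \cite{CS} through the bijection.
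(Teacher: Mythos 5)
Your proposal is correct and follows essentially the same route as the paper, which itself offers no explicit proof beyond stating that the result is obtained by reinterpreting Theorem~3 of \cite{CS} through the correspondence: the lemma identifying $T_{n,k}$ with $L_{n,k}$, together with $\la_i = I_i(w)$ from Theorem~\ref{newCtoL}, gives $|w|_o = |\la|_o$ and $|w|_e = |\la|_e$ term by term, and the closed product form is then imported verbatim. Your added care about matching the parity bookkeeping in \cite{CS} is a reasonable (and slightly more explicit) version of exactly what the paper does implicitly.
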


\subsection{Windows with smaller entries}\label{sec:smaller}\

\noindent
How many $\la \in L_n$ have largest part $\leq t$?  This is the lecture hall analog of counting partitions in an $n$ by $t$ box.  It has a nice answer if $t$ is expressed in the form $jn + k$.

\begin{theorem}[\cite{CLS}]
For $n \geq 0$, $j \geq 0$, and $0 \leq k \leq n$, the number of lecture hall partitions in $L_n$ satisfying $\la_n\leq  jn + k$ is
$(j+1)^{n-k}(j+2)^k$.
\end{theorem}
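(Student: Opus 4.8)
The plan is to transport the problem to the parabolic quotient $\tldC_n/C_n$ and count windows, exploiting the explicit residue description of coset representatives. Under the bijection of Theorem~\ref{newCtoL}, the statistic $\la_n$ on $L_n$ is carried to $\max(w)$ on $\tldC_n/C_n$ by Lemma~\ref{lem:max}, where $\max(w) = w_n - \floor{w_n/(n+1)} - n$. Since this expression is strictly increasing as $w_n$ runs over the admissible largest-entry values (the positive integers not divisible by $n+1$), the condition $\la_n \le jn+k$ is equivalent to a threshold condition $w_n \le W$ on the largest window entry. My first step is to pin down $W$: writing $w_n = q(n+1)+s$ with $1 \le s \le n$, a short computation gives $\max(w) = qn-(n-s)$, and comparing against $jn+k$ in the three regimes $q\le j$, $q=j+1$, and $q\ge j+2$ shows that $\max(w)\le jn+k$ holds if and only if $w_n \le W := (j+1)(n+1)+k$.

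Next I would use the residue characterization from Section~\ref{sec:tldC}: a window of $w\in\tldC_n/C_n$ is an increasing sequence of positive integers whose residues modulo $N=2(n+1)$ pick out exactly one member of each antipodal pair $\{r,\,N-r\}$, $r=1,\dots,n$ (the residues $0$ and $n+1$ being forbidden). Consequently, choosing such a $w$ with every entry $\le W$ is the same as independently choosing, for each $r\in\{1,\dots,n\}$, one positive integer $\le W$ congruent to $r$ or to $N-r$ modulo $N$; distinctness across different pairs and the increasing order of the resulting window are then automatic. Hence the quantity we want factorizes as
\[
\#\{w\in\tldC_n/C_n : w_n \le W\} \;=\; \prod_{r=1}^{n} c_r, \qquad c_r := \#\{v : 1\le v\le W,\ v\equiv \pm r \!\!\pmod N\}.
\]

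It then remains to evaluate each factor $c_r$ and show that exactly $k$ of them equal $j+2$ while the rest equal $j+1$, whereupon the product is $(j+1)^{n-k}(j+2)^k$ as claimed. Each length-$N$ block of consecutive integers contains precisely two values congruent to $\pm r$, so $c_r$ equals $2\floor{W/N}$ plus the contribution of a leftover block of length $W \bmod N$. Writing $W=(j+1)(n+1)+k$ and splitting on the parity of $j+1$, I expect to find $c_r = (j+1) + [\,r\le k\,]$ when $j$ is odd and $c_r = (j+1)+[\,r\ge n+1-k\,]$ when $j$ is even. In either case exactly $k$ of the indices $r\in\{1,\dots,n\}$ contribute the extra $+1$, giving the result.

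I expect the parity bookkeeping in this last step to be the main technical obstacle: getting the leftover-block count exactly right in both parities, and in particular checking that the uniform threshold $W=(j+1)(n+1)+k$ behaves correctly at the boundary value $k=0$, where $W$ is itself a forbidden (multiple-of-$(n+1)$) residue and so represents the nearest admissible bound rather than an actual window entry. Everything else follows directly from the correspondence together with the explicit residue structure of $\tldC_n/C_n$.
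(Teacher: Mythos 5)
Your proposal is correct, and it is worth pointing out that the paper itself does not prove this statement: it is imported from \cite{CLS} and used as a black box to deduce Corollary~\ref{cor:snt}. You run the logic in the opposite direction, pushing the problem through Theorem~\ref{newCtoL} and Lemma~\ref{lem:max} into $\tldC_n/C_n$ and counting windows there. I checked each step and it holds: writing $w_n=q(n+1)+s$ with $1\le s\le n$ gives $\max(w)=qn-(n-s)$, and the three cases $q\le j$, $q=j+1$, $q\ge j+2$ do produce the uniform threshold $w_n\le W=(j+1)(n+1)+k$ (with $k=0$ causing no trouble, since $W$ being a forbidden residue only means equality is never attained). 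The factorization $\prod_{r=1}^n c_r$ is valid because entries from distinct antipodal classes are automatically distinct and sorting recovers the window, and the block count does give $c_r=(j+1)+\chi(r\le k)$ when $j+1$ is even and $c_r=(j+1)+\chi(r\ge n+1-k)$ when $j+1$ is odd, so exactly $k$ factors equal $j+2$ in either parity. Your argument is in effect the generalization, to arbitrary $j$ and $k$, of the ``alternate direct proof'' that the paper sketches immediately after Theorem~\ref{thm:smallwindows} for the special case $W=(2t+1)(n+1)$. What your route buys is a self-contained, Bott-free proof of the \cite{CLS} count from the residue structure of coset windows; what it costs is a dependence on Lemma~\ref{lem:max}, whose proof in the paper leans on the abacus conventions of \cite{Brant}, so your argument is only as self-contained as that lemma (though $\la_n=I_n(w)=w_n-\floor{w_n/(n+1)}-n$ can also be verified directly from the formula $I_{n,j}(w)=2c_n-\chi(\sigma_j>\sigma_n)-\chi(\sigma_j+\sigma_n<0)$ if one wants to avoid the citation).
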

As a consequence we have the following.
\begin{corollary}
The number of lecture hall partitions $\la$ with $\la_n \leq 2tn$ is $(2t+1)^n$.
\end{corollary}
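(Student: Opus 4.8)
The plan is to obtain this count as an immediate specialization of the preceding theorem from \cite{CLS}, which enumerates the lecture hall partitions in $L_n$ with $\la_n \leq jn+k$ for $j \geq 0$ and $0 \leq k \leq n$ as $(j+1)^{n-k}(j+2)^k$. The only real task is to write the bound $2tn$ in the required form $jn+k$ and read off the answer.

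Concretely, I would set $j=2t$ and $k=0$. These satisfy the hypotheses $j \geq 0$ and $0 \leq k \leq n$, and they give $jn+k = 2tn$ exactly, so the theorem applies verbatim to the quantity we want. Substituting into the enumeration $(j+1)^{n-k}(j+2)^k$ then yields $(2t+1)^{n}(2t+2)^{0}=(2t+1)^n$, which is the claimed count.

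There is essentially no obstacle here: all of the combinatorial content is carried by the cited theorem, and the corollary reduces to a one-line substitution. As a consistency check one can instead use the alternative representation $2tn=(2t-1)n+n$, i.e.\ $j=2t-1$ and $k=n$, which the theorem evaluates to $(2t)^{0}(2t+1)^{n}=(2t+1)^n$, in agreement. This confirms that the count is independent of the (non-unique) choice of representation at the boundary case $k\in\{0,n\}$, and so the specialization is unambiguous.
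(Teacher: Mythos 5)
Your proposal is correct and matches the paper's (implicit) argument exactly: the corollary is obtained by specializing the cited theorem with $j=2t$ and $k=0$, giving $(2t+1)^{n}(2t+2)^{0}=(2t+1)^n$. The consistency check via $j=2t-1$, $k=n$ is a nice touch (valid for $t\geq 1$), but the main substitution is all that is needed.
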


By Observation~\ref{ob:max}, we define a corresponding set $S_{n,t}$ of these smaller windows by 
\[
S_{n,t} = \big\{w \in \tldC_n/C_n \ : \ w_n \leq (2t+1)(n+1) \big\},
\]
and have the following corollary.

\begin{corollary}\label{cor:snt}
\[
|S_{n,t}|=(2t+1)^n.
\]
\end{corollary}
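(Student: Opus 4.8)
The plan is to count $S_{n,t}$ by transporting the problem across the bijection $\tldC_n/C_n \to L_n$ of Theorem~\ref{newCtoL} and then quoting the preceding Corollary, which already tells us that exactly $(2t+1)^n$ lecture hall partitions $\la$ satisfy $\la_n \le 2tn$. Thus it suffices to prove that this bijection carries $S_{n,t}$ exactly onto $\{\la \in L_n : \la_n \le 2tn\}$; equivalently, that for a matched pair $(w,\la)$ one has $w_n \le (2t+1)(n+1)$ if and only if $\la_n \le 2tn$. Granting this, the corollary is immediate, since a restriction of a bijection to matching subsets is again a bijection.

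The tool for the equivalence of the two inequalities is Lemma~\ref{lem:max}, which expresses the largest part of $\la$ purely in terms of the largest window entry: $\la_n = \max(w) = w_n - \floor{w_n/(n+1)} - n$. I would first observe that the right-hand side depends on $w$ only through $w_n$ and, as a function of the integer $w_n$, is nondecreasing, since increasing $w_n$ by $1$ increases the floor term by at most $1$. Next I would evaluate this function at the boundary value $w_n = (2t+1)(n+1)$ --- exactly the evaluation recorded in Observation~\ref{ob:max} --- to obtain $\la_n = 2tn$. Combining monotonicity with this single boundary evaluation then yields the desired equivalence $w_n \le (2t+1)(n+1) \Longleftrightarrow \la_n \le 2tn$.

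The only delicate point, and the step I expect to require the most care, is the interaction between the floor function and the fact that multiples of $n+1$ are not admissible window entries of a coset representative. Because $(2t+1)(n+1)$ is itself a multiple of $n+1$, it is not an attainable value of $w_n$, so I must check that $\max(w)$ is \emph{strictly} increasing across the admissible values of $w_n$ --- the increment of $w_n - \floor{w_n/(n+1)} - n$ drops to zero only as one crosses a forbidden multiple of $n+1$ --- and hence that the weak constraint $w_n \le (2t+1)(n+1)$ is equivalent on admissible entries to $w_n < (2t+1)(n+1)$ and pulls back cleanly to $\la_n \le 2tn$ with no off-by-one discrepancy. Once this monotonicity-with-a-boundary-value argument is in place, the restricted bijection is established and Corollary~\ref{cor:snt} follows.
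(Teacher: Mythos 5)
Your proposal is correct and follows essentially the same route as the paper: transport $S_{n,t}$ through the bijection of Theorem~\ref{newCtoL}, identify its image as $\{\la \in L_n : \la_n \le 2tn\}$ via the formula $\la_n = w_n - \floor{w_n/(n+1)} - n$ of Lemma~\ref{lem:max} (this is exactly what Observation~\ref{ob:max} encodes), and invoke the preceding corollary's count of $(2t+1)^n$. Your extra care at the boundary is well placed, since $(2t+1)(n+1)$ is a multiple of $n+1$ and hence never an actual window entry; your monotonicity argument shows the weak inequality $w_n \le (2t+1)(n+1)$ still pulls back exactly to $\la_n \le 2tn$, a detail the paper leaves implicit.
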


We can refine Corollary~\ref{cor:snt} to count via the statistics $\alpha$ and $\beta$.  To do this, we will apply Equation~3.6 from \cite{CLS15} which says
\[
\sum_{\la \in L_n, \la_n \leq 2t}u^{|\ceil{\la}|}v^{o(\la)} =
\left (
\frac{(1+uv-u^{2t+1}-u^{2t+2}}{1-u^2}
\right )^n.
\]

Under our correspondence, it follows that
\begin{theorem}\label{thm:smallwindows}
\[
\sum_{w \in S_{n,t}} a^{\alpha(w)}b^{\beta(w) }=
\left (
\frac{(1+b-a^tb^{t+1}(1+a)}{1-ab}
\right )^n.
\]
\end{theorem}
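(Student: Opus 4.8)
The plan is to pull Equation~3.6 of \cite{CLS15} back across the bijection of Theorem~\ref{newCtoL} by a single monomial change of variables that converts the pair of lecture-hall statistics $\big(\abs{\ceil{\la}^{(1,2,\ldots,n)}},\,o(\ceil{\la})\big)$ into the pair $\big(\alpha(w),\beta(w)\big)$. First I would match the two index sets. By Observation~\ref{ob:max}, an element $w\in\tldC_n/C_n$ satisfies $w_n\leq(2t+1)(n+1)$ precisely when its image $\la$ has $\la_n\leq 2tn$, so the bijection of Theorem~\ref{newCtoL} restricts to a bijection from $S_{n,t}$ onto $\{\la\in L_n:\la_n\leq 2tn\}$, which is the range summed over in Equation~3.6; as a sanity check, both sides collapse to $(2t+1)^n$ at $a=b=1$, in agreement with Corollary~\ref{cor:snt}.

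Next I would translate the statistics. Setting $P=\abs{\ceil{\la}^{(1,2,\ldots,n)}}$ and $Q=\abs{\ceil{\la}^{(2,4,\ldots,2n)}}$, Lemmas~\ref{beta} and \ref{alpha} give $\beta(w)=Q$ and $\alpha(w)=P-Q$, whence $\abs{\ceil{\la}^{(1,2,\ldots,n)}}=\alpha(w)+\beta(w)$, while Lemma~\ref{lem:odd} gives $o(\ceil{\la})=\beta(w)-\alpha(w)$. Therefore, for each $\la$,
\[
u^{\abs{\ceil{\la}^{(1,2,\ldots,n)}}}\,v^{o(\ceil{\la})}=u^{\alpha+\beta}v^{\beta-\alpha}=(u/v)^{\alpha}(uv)^{\beta},
\]
so the substitution $a=u/v$, $b=uv$ --- equivalently $u=\sqrt{ab}$, $v=\sqrt{b/a}$, giving $u^2=ab$ and $uv=b$ --- sends the summand of Equation~3.6 to $a^{\alpha(w)}b^{\beta(w)}$ term by term, which is exactly the summand of the claimed identity. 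It then remains to simplify the right-hand side under the same substitution. With $u^2=ab$ and $uv=b$ one computes $1+uv=1+b$, $1-u^2=1-ab$, $u^{2t+2}=(ab)^{t+1}=a^{t+1}b^{t+1}$, and, pairing the odd power with its factor of $v$, $u^{2t+1}v=(ab)^t(uv)=a^t b^{t+1}$. Substituting these into the $n$th power yields
\[
\left(\frac{1+b-a^t b^{t+1}-a^{t+1}b^{t+1}}{1-ab}\right)^{n}=\left(\frac{1+b-a^t b^{t+1}(1+a)}{1-ab}\right)^{n},
\]
the desired formula.

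I expect the \emph{crux} to be bookkeeping rather than any new idea. Because $u=\sqrt{ab}$ introduces half-integer powers, I would need to confirm that only integer exponents actually occur. This follows from the parity identity $\abs{\ceil{\la}^{(1,2,\ldots,n)}}\equiv o(\ceil{\la})\pmod 2$ --- the sum of the entries of a nonnegative integer vector and the number of its odd entries always share the same parity --- which guarantees that $\alpha=\big(P-o(\ceil{\la})\big)/2$ and $\beta=\big(P+o(\ceil{\la})\big)/2$ are integers, and equivalently is what makes the odd-power term $u^{2t+1}v$ resolve to the honest monomial $a^t b^{t+1}$ once its accompanying $v$ is tracked. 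Since Equation~3.6 is an identity of rational functions in $u,v$, specializing $u,v$ to these radicals is legitimate, and both sides are then seen to lie in $\mathbb{Q}(a,b)$. The one remaining care-point is the reconciliation of the box-size conventions between Equation~3.6 and the definition of $S_{n,t}$, which Observation~\ref{ob:max} together with the $a=b=1$ check against Corollary~\ref{cor:snt} settles.
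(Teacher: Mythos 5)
Your proof is correct and follows exactly the route the paper intends: push Equation~3.6 of \cite{CLS15} through the bijection using $\abs{\ceil{\la}^{(1,2,\ldots,n)}}=\alpha(w)+\beta(w)$ and $o(\ceil{\la})=\beta(w)-\alpha(w)$, i.e.\ the substitution $uv=b$, $u^2=ab$, and then simplify. You also correctly treat the odd-power term as $u^{2t+1}v$ rather than the $u^{2t+1}$ appearing in the paper's transcription of Equation~3.6 (without that $v$ the formula already fails at $n=1$, $t=0$), so your bookkeeping supplies precisely the details the paper compresses into ``under our correspondence, it follows.''
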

Note that setting $a=b$ gives
\begin{eqnarray*}
\sum_{w \in S_{n,t}} a^{\alpha(w)+\beta(w) } & = & 
\left (
\frac{(1+a)(1-a^{2t+1})}{1-a^2}
\right )^n\\
& = & [2t+1]_a^n.
\end{eqnarray*}

This proof of Theorem~\ref{thm:smallwindows} makes use of our correspondence.  An alternate direct proof chooses independently for all $i$ from $1$ to $n$ which of the $2t+1$ values of $i$ or $-i$ modulo $2n+2$ appears in the window.

\subsection{A quadratic statistic for $C_n$}\

\noindent
Ehrhart theory was applied in \cite{SS} to give a further connection between lecture hall partitions and signed permutations.
The following is a specialization of the main result of that paper (Theorem~6).
\begin{theorem}[\cite{SS}]
\[
\sum_{\la \in L_n} u^{\abs{\ceil{\la}_{2n}}} q^{\abs{\la}}= \frac{
\sum_{\sigma \in C_n} u^{\comaj(\sigma)} q^{\lhp_C(\sigma)} }
{\prod_{i=0}^{n-1}(1-u^{n-i}q^{2(i+1)+ \cdots + 2n})}.
\]
\end{theorem}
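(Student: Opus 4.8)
The plan is to evaluate the left-hand side directly through the bijection of Section~\ref{sec:proof}, converting the sum over $L_n$ into a sum over pairs $(b,\sigma)$ and then computing the inner sum as a product of geometric series. Write $\s=(2,4,\ldots,2n)$ and read the left-hand statistic as $\abs{\ceil{\la}^{(\s)}}$. By the Lemma of \cite{SS}, each $\la\in L_n=\Lns$ corresponds to the pair $(b,e)=(\ceil{\la}^{(\s)},e^{(\s)}(\la))\in T_n^{(\s)}$, and applying $\Psi^{-1}$ we may write $e=\Psi(\sigma)$ for a unique $\sigma\in C_n$. Under this identification, $\abs{\ceil{\la}^{(\s)}}=\sum_{i=1}^n b_i$; also $\abs{\la}=\sum_{i=1}^n 2i\,b_i-\abs{e}$ with $\abs{e}=\inv_C(\sigma)$ by Lemma~\ref{lem:invsigma}; and $\Asc^{(\s)}(e)=\Des(\sigma)$ by the identity $\Asc^{(\s)}(\Psi(\sigma))=\Des(\sigma)$ recorded above. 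The left-hand side thus becomes
\[
\sum_{\sigma\in C_n} q^{-\inv_C(\sigma)}\ \sum_{b}\ u^{\,\sum_{i=1}^n b_i}\,q^{\,\sum_{i=1}^n 2i\,b_i},
\]
where the inner sum ranges over all integer sequences $0=b_0\le b_1\le\cdots\le b_n$ that increase strictly at each position $i\in\Des(\sigma)$ (with the convention $b_0=0$ absorbing the position-$0$ descent).

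Next I would evaluate the inner $b$-sum by the substitution $d_k=b_k-b_{k-1}$ for $1\le k\le n$, so that $d_k\ge 0$ in general while $d_k\ge 1$ exactly when $k-1\in\Des(\sigma)$. Since $b_i=\sum_{k\le i}d_k$, one gets $\sum_i b_i=\sum_k (n-k+1)\,d_k$ and $\sum_i 2i\,b_i=\sum_k r_k\,d_k$ with $r_k=2k+2(k+1)+\cdots+2n$. The inner sum therefore factors over $k$ into geometric series in $(u^{n-k+1}q^{r_k})^{d_k}$: each free index $k$ contributes $1/(1-u^{n-k+1}q^{r_k})$, while each forced index $k$ (those with $k-1\in\Des(\sigma)$) contributes that same factor times the monomial $u^{n-k+1}q^{r_k}$. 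Reindexing by $i=k-1\in\{0,\ldots,n-1\}$, the product of denominators is exactly the $\sigma$-independent quantity $\prod_{i=0}^{n-1}\bigl(1-u^{n-i}q^{\,2(i+1)+\cdots+2n}\bigr)$, which may be pulled outside the sum over $\sigma$.

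It then remains to identify the $\sigma$-dependent numerator. Collecting the forced monomials over $i\in\Des(\sigma)$ yields $u^{\,\sum_{i\in\Des(\sigma)}(n-i)}\,q^{\,\sum_{i\in\Des(\sigma)}(2(i+1)+\cdots+2n)}$; the $u$-exponent is precisely $\comaj(\sigma)$ by definition, and combining the $q$-exponent with the leftover $q^{-\inv_C(\sigma)}$ gives exactly $\lhp_C(\sigma)=-\inv_C(\sigma)+\sum_{i\in\Des(\sigma)}(2(i+1)+\cdots+2n)$. Summing over $\sigma\in C_n$ produces $\sum_{\sigma\in C_n} u^{\comaj(\sigma)}q^{\lhp_C(\sigma)}$ in the numerator, which is the claimed identity.

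I expect the main obstacle to be the bookkeeping in the middle step: correctly tracking which increments $d_k$ are forced positive, matching that data to the extended descent set (including the position-$0$ descent, which occurs when $\sigma_1<0$ and is responsible for the $u^n$ factor together with the $b_0=0$ convention), and confirming that the assembled exponents are $\comaj$ and $\lhp_C$ rather than shifted variants. A cleaner but less self-contained alternative is to observe that the statement is the specialization $\s=(2,4,\ldots,2n)$ of \cite[Theorem 6]{SS}, translated through the dictionary $\amaj^{(\s)}\to\comaj$, $\lhp^{(\s)}\to\lhp_C$, and $\abs{\ceil{\la}^{(\s)}}\to\abs{\ceil{\la}^{(2,4,\ldots,2n)}}$ established in Section~\ref{sec:stats}.
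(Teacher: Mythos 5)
Your argument is correct, but it is worth noting that the paper itself offers no proof of this statement: it is quoted as a known result, namely the specialization $\s=(2,4,\ldots,2n)$ of \cite[Theorem~6]{SS}, exactly as you observe in your closing alternative. What you have written is a self-contained derivation assembled from the machinery the paper builds in Section~\ref{sec:proof}: the bijection $\la\mapsto(b,e)\in T_n^{(\s)}$, the bijection $\Psi$ with $\abs{\Psi(\sigma)}=\inv_C(\sigma)$ and $\Asc^{(\s)}(\Psi(\sigma))=\Des(\sigma)$, and then the standard ``stars-and-bars with forced increments'' evaluation via $d_k=b_k-b_{k-1}$. I checked the bookkeeping you flagged as the main risk: $\sum_i b_i=\sum_k(n-k+1)d_k$ and $\sum_i 2i\,b_i=\sum_k(2k+\cdots+2n)d_k$ are right; the reindexing $i=k-1$ turns the forced monomial at a descent $i$ into $u^{n-i}q^{2(i+1)+\cdots+2n}$, whose product over $\Des(\sigma)$ gives $u^{\comaj(\sigma)}$ and, after absorbing $q^{-\inv_C(\sigma)}$, exactly $q^{\lhp_C(\sigma)}$; and the position-$0$ descent (present iff $\sigma_1<0$, equivalently $e_1>0$) is correctly handled by the $b_0=0$ convention and contributes the $u^{n}$ term of $\comaj$. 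The denominator is $\sigma$-independent, so pulling it out of the $\sigma$-sum is legitimate, and $\Psi$ is indeed a bijection (the sign of $\sigma_i$ is recoverable from whether $e_i<i$ or $e_i\geq i$), so the change of summation variable is valid. In short, your proof is a correct and more informative route than the paper's bare citation; it is essentially the proof of \cite[Theorem~6]{SS} rederived in the type-$C$ special case, and it has the virtue of making visible why the denominator has the specific form $\prod_{i=0}^{n-1}(1-u^{n-i}q^{2(i+1)+\cdots+2n})$.
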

Our correspondence between $L_n$ and and $\tldC_n/C_n$ gives
\begin{equation}
\label{eq:parabolicEhrhart}
\sum_{w \in \tldC_n/C_n} u^{\beta(w)} q^{\inv_{\tldC}(w)}=  \frac{
\sum_{\sigma \in C_n} u^{\comaj(\sigma)} q^{\lhp_C(\sigma)} }
{\prod_{i=0}^{n-1}(1-u^{n-i}q^{2(i+1)+ \cdots + 2n})}.
\end{equation}
Applying the refinement of Bott's formula from Equation~\eqref{eq:Bott} when $a=1$ to Equation~\eqref{eq:parabolicEhrhart} gives the joint distribution of $\comaj$ and $\lhp$ on $C_n$.
\begin{corollary}
\begin{equation}
\label{eq:comajlhp}
		 \sum_{\sigma \in C_n} u^{\comaj(\sigma)} q^{\lhp_C(\sigma)}  = \prod_{i=1}^n  \frac{1+uq^i}{1-uq^{n+i}} 
	\prod_{i=0}^{n-1}(1-u^{n-i}q^{2(i+1)+ \cdots + 2n}).
\end{equation}
\end{corollary}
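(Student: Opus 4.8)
The plan is to read off both factors on the right-hand side directly from results already established, so that the corollary becomes a one-line algebraic consequence of Equation~\eqref{eq:parabolicEhrhart}. First I would evaluate the left-hand side of \eqref{eq:parabolicEhrhart}, namely $\sum_{w \in \tldC_n/C_n} u^{\beta(w)} q^{\inv_{\tldC}(w)}$, by specializing the refined Bott formula \eqref{eq:Bott}. Setting $a=1$ there kills the $\alpha$-grading (since $1^{\alpha(w)}=1$ for every $w$) and leaves
\[
\sum_{w \in \tldC_n/C_n} q^{\inv_{\tldC}(w)} b^{\beta(w)} = \prod_{i=1}^n \frac{1+bq^i}{1-bq^{n+i}};
\]
renaming $b$ as $u$ reproduces exactly the sum on the left of \eqref{eq:parabolicEhrhart}.

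Substituting this product into the left-hand side of \eqref{eq:parabolicEhrhart} then gives
\[
\prod_{i=1}^n \frac{1+uq^i}{1-uq^{n+i}} = \frac{\sum_{\sigma \in C_n} u^{\comaj(\sigma)} q^{\lhp_C(\sigma)}}{\prod_{i=0}^{n-1}(1-u^{n-i}q^{2(i+1)+ \cdots + 2n})}.
\]
Multiplying both sides by the denominator $\prod_{i=0}^{n-1}(1-u^{n-i}q^{2(i+1)+ \cdots + 2n})$ isolates the numerator and yields \eqref{eq:comajlhp} verbatim.

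There is essentially no obstacle beyond bookkeeping: the only point to verify is that the specialization $a=1$, $b=u$ of \eqref{eq:Bott} genuinely matches the $\beta$-weighted, $\inv_{\tldC}$-weighted generating function on the left of \eqref{eq:parabolicEhrhart}, which is immediate. All of the real content is carried by \eqref{eq:parabolicEhrhart} itself, which in turn rests on the Ehrhart-theoretic identity of \cite{SS} transported across the bijection $\tldC_n/C_n \to L_n$ of Theorem~\ref{newCtoL}; once that identity is granted, the corollary follows by clearing a single denominator.
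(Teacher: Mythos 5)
Your proof is correct and follows exactly the paper's route: the paper likewise obtains \eqref{eq:comajlhp} by setting $a=1$ in the refined Bott formula \eqref{eq:Bott}, substituting the resulting product for the left side of \eqref{eq:parabolicEhrhart}, and clearing the denominator. Nothing is missing.
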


Equation~\eqref{eq:comajlhp} is a refinement of the well-known distribution
\[
 \sum_{\sigma \in C_n} u^{\comaj(\sigma)} = (1+u)^n\prod_{i=1}^n\, [i]_u.
 \]
If instead we specialize Equation~\eqref{eq:comajlhp} to when $u=1$, we arrive at an interesting distribution for the quadratic statistic $\lhp_C$ on $C_n$, which appears to be new.

\begin{corollary}\label{cor:quadlhp}
	\[
	\sum_{\sigma \in C_n} q^{\lhp_C(\sigma)}  = \prod_{k=1}^{n} [2k]_{q^{2(n-k)+1}}
	.\]
\end{corollary}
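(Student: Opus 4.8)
The plan is to read the statement off Equation~\eqref{eq:comajlhp} by specializing $u=1$ and then simplifying the resulting product. Setting $u=1$ collapses the left side to $\sum_{\sigma\in C_n} q^{\lhp_C(\sigma)}$, so the whole corollary reduces to the product identity
\[
\prod_{i=1}^n \frac{1+q^i}{1-q^{n+i}} \;\prod_{i=0}^{n-1}\bigl(1-q^{2(i+1)+\cdots+2n}\bigr) \;=\; \prod_{k=1}^n [2k]_{q^{2(n-k)+1}}.
\]
I would handle the two factors on the left separately and then recombine.

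First I would simplify the Bott factor. Writing $1+q^i=(1-q^{2i})/(1-q^i)$, the numerator contributes the even factors $\prod_{i=1}^n(1-q^{2i})$, while the denominator becomes $\prod_{i=1}^n(1-q^i)\prod_{i=1}^n(1-q^{n+i})=\prod_{m=1}^{2n}(1-q^m)$. The even factors cancel exactly, leaving
\[
\prod_{i=1}^n \frac{1+q^i}{1-q^{n+i}}=\frac{1}{\prod_{k=1}^n(1-q^{2k-1})},
\]
the reciprocal of the odd factors $1-q,1-q^3,\ldots,1-q^{2n-1}$.

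Next I would rewrite the second product. Using $2(i+1)+\cdots+2n=(n-i)(n+i+1)$ and reindexing by $k=n-i$, it becomes $\prod_{k=1}^n(1-q^{k(2n-k+1)})$. Set $f(k)=k(2n-k+1)$, and note that the exponents in the numerators of the target brackets are $2k(2n-2k+1)=f(2k)$, i.e. $\{f(m):m\text{ even},\,2\le m\le 2n\}$. The heart of the argument is that $f$ is symmetric under $k\mapsto 2n+1-k$. I would exploit this through the explicit map $\phi$ on $\{1,\ldots,n\}$ sending even $k$ to $k$ and odd $k$ to $2n+1-k$: this $\phi$ is injective, lands in $\{2,4,\ldots,2n\}$ (hence is a bijection onto the evens by counting), and preserves $f$. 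This yields the multiset identity $\{f(k):1\le k\le n\}=\{f(m):m\text{ even}\}$, so the second product equals $\prod_{k=1}^n(1-q^{2k(2n-2k+1)})$.

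Finally I would recombine. Reindexing the odd denominator as $\prod_{k=1}^n(1-q^{2k-1})=\prod_{k=1}^n(1-q^{2n-2k+1})$, the left-hand side becomes
\[
\prod_{k=1}^n\frac{1-q^{2k(2n-2k+1)}}{1-q^{2n-2k+1}}=\prod_{k=1}^n [2k]_{q^{2n-2k+1}}=\prod_{k=1}^n [2k]_{q^{2(n-k)+1}},
\]
which is the claimed formula. The main obstacle is the middle step, namely recognizing and proving the exponent-matching via the reflection symmetry of $f$; once the bijection $\phi$ is in hand, the remainder is routine bookkeeping with the $q$-bracket.
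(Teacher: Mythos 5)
Your proposal is correct and follows essentially the same route as the paper: specialize Equation~\eqref{eq:comajlhp} at $u=1$, convert $1+q^i$ to $(1-q^{2i})/(1-q^i)$ so the Bott factor collapses to $1/\prod_{k=1}^n(1-q^{2k-1})$, and then match the remaining numerator exponents to the $q$-brackets. The only difference is in the bookkeeping of that last matching step --- the paper splits the product $\prod_i(1-q^{2i+\cdots+2n})$ according to the parity of the number of terms in the arithmetic sum and reindexes two sub-products, whereas you write all exponents uniformly as $f(k)=k(2n-k+1)$ and invoke the reflection symmetry $f(k)=f(2n+1-k)$ to identify the exponent multiset with $\{f(2),f(4),\ldots,f(2n)\}$; both devices establish the same identity with comparable effort.
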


\begin{proof}
Specializing Equation~\eqref{eq:comajlhp} to when $u=1$ and simplifying gives
\[\begin{aligned}
\sum_{\sigma \in C_n} q^{\lhp_C(\sigma)}  &= \prod_{i=1}^n  \frac{(1+q^i)(1-q^{2i+ \cdots + 2n})}{1-q^{n+i}} \\
&= \prod_{i=1}^n  \frac{(1-q^{2i})(1-q^{2i+ \cdots + 2n})}{(1-q^i)(1-q^{n+i})} \\
&= \prod_{i=1}^n  \frac{(1-q^{2i+ \cdots + 2n})}{(1-q^{2i-1})} \\
&= \prod_{k=1}^{\lceil n/2 \rceil}  \frac{(1-q^{2(2k-1)(n-k+1)})}{(1-q^{2k-1})} \prod_{k=1}^{\lfloor n/2 \rfloor}  \frac{(1-q^{2k(2(n-k)+1)})}{(1-q^{2(n-k)+1})} \\
&= \prod_{k=1}^{\lceil n/2 \rceil} [2(n-k+1)]_{q^{2k-1}}  \prod_{k=1}^{\lfloor n/2 \rfloor} [2k]_{q^{2(n-k)+1}} \\
&= \prod_{k=n-\lfloor n/2 \rfloor}^{n} [2k]_{q^{2(n-k+1)-1}}  \prod_{k=1}^{\lfloor n/2 \rfloor} [2k]_{q^{2(n-k)+1}} \\
&= \prod_{k=1}^{n} [2k]_{q^{2(n-k)+1}} \\
\end{aligned}
\]
The fourth equality follows because the sum $i+\cdots+n$ has two nice formulas depending on whether there are an even or an odd number of terms.  If there are $2k-1$ terms, then the sum is $(2k-1)$ times the mean of the terms $(n-k+1)$.  If there are $2k$ terms, then the sum is $k$ times the sum of the first and the last term, $(n+n-2k+1)$.  
\end{proof}

Corollary~\ref{cor:quadlhp} can be viewed as a type~C analog of the following result from \cite{SS}.

\begin{theorem}[Corollary~5 of \cite{SS}]
	\[
	\sum_{\pi \in S_n} q^{\lhp(\pi)}  = \prod_{k=1}^{n} [k]_{q^{2(n-k)+1}}
	.\]
\end{theorem}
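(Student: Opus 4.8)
The plan is to run the proof of Corollary~\ref{cor:quadlhp} in type~A, with the bijection $\Theta\colon S_n\to\I_n^{(1,2,\ldots,n)}$ playing the role that $\Psi$ played for $C_n$. First I would record that $\Theta$ transports the relevant statistics: since $\inv(\pi)=\abs{\Theta(\pi)}$ and $\Des(\pi)=\Asc^{(1,2,\ldots,n)}(\Theta(\pi))$, the definitions of $\amaj^{(\s)}$ and $\lhp^{(\s)}$ give $\lhp(\pi)=\lhp^{(1,2,\ldots,n)}(\Theta(\pi))$ together with $\sum_{i\in\Des(\pi)}(n-i)=\amaj^{(1,2,\ldots,n)}(\Theta(\pi))$. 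Consequently $\sum_{\pi\in S_n}q^{\lhp(\pi)}=\sum_{e\in\I_n^{(\s)}}q^{\lhp^{(\s)}(e)}$ for $\s=(1,2,\ldots,n)$, so it suffices to evaluate the inversion-sequence sum.

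Next I would invoke the Ehrhart-theoretic identity of \cite{SS} (their Theorem~6), whose type~C specialization is the display preceding \eqref{eq:parabolicEhrhart}, now taken with $\s=(1,2,\ldots,n)$. This writes $\sum_{e}u^{\amaj^{(\s)}(e)}q^{\lhp^{(\s)}(e)}$ as the numerator of a rational function with denominator $\prod_{i=0}^{n-1}\big(1-u^{n-i}q^{(i+1)+\cdots+n}\big)$ and with a ceiling statistic of $\la$ marked by $u$ on the left; the precise form of that $u$-marginal is immaterial, because I would immediately set $u=1$. At $u=1$ the left side collapses to $\sum_{\la\in L_n^{(\s)}}q^{\abs{\la}}$, and here is the key point: rescaling $\s$ by a constant does not change the set $L_n^{(\s)}$, so $L_n^{(1,2,\ldots,n)}=L_n^{(2,4,\ldots,2n)}=L_n$, whence this sum equals $\prod_{i=1}^n (1-q^{2i-1})^{-1}$ by the Lecture Hall Theorem. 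Solving for the numerator and reindexing the denominator by $j=i+1$ yields
\[
\sum_{\pi\in S_n}q^{\lhp(\pi)}=\frac{\prod_{j=1}^n\big(1-q^{\,j+(j+1)+\cdots+n}\big)}{\prod_{i=1}^n\big(1-q^{2i-1}\big)}.
\]

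Finally I would simplify this ratio exactly as in the fourth-through-last equalities of Corollary~\ref{cor:quadlhp}, but with the exponents halved (since $\s=(1,2,\ldots,n)$ replaces $(2,4,\ldots,2n)$). Splitting each partial sum $j+\cdots+n$ by the parity of its number of terms $n-j+1$ --- an odd count $2k-1$ contributes exponent $(2k-1)(n-k+1)$ and an even count $2k$ contributes $k\big(2(n-k)+1\big)$ --- factors the numerator as $\prod_{k=1}^{\lceil n/2\rceil}\big(1-q^{(2k-1)(n-k+1)}\big)\prod_{k=1}^{\lfloor n/2\rfloor}\big(1-q^{k(2(n-k)+1)}\big)$. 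The denominator $\prod_{i=1}^n(1-q^{2i-1})$ distributes over these two families by matching $q^{2k-1}$ with the odd family and $q^{2(n-k)+1}$ with the even family, producing $\prod_{k=1}^{\lceil n/2\rceil}[n-k+1]_{q^{2k-1}}\prod_{k=1}^{\lfloor n/2\rfloor}[k]_{q^{2(n-k)+1}}$; reindexing the first product by $k\mapsto n-k+1$ merges the two into $\prod_{k=1}^n[k]_{q^{2(n-k)+1}}$, as desired.

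The routine bookkeeping aside, the main thing to pin down is the first step: that $\Theta$ really carries the comajor index $\sum_{i\in\Des(\pi)}(n-i)$ and $\lhp(\pi)$ on $S_n$ to $\amaj^{(\s)}$ and $\lhp^{(\s)}$ on $\I_n^{(1,2,\ldots,n)}$ --- the type~A analogue of Lemma~\ref{lem:invsigma} together with the $\Des=\Asc^{(\s)}$ identity already recorded for $\Theta$ --- and that the general \cite{SS} identity carries precisely the denominator $\prod_{i=0}^{n-1}(1-u^{n-i}q^{s_{i+1}+\cdots+s_n})$. Once these are in hand, the evaluation is the type~A shadow of Corollary~\ref{cor:quadlhp}, the only structural input being the partial-sum parity split, which is already available.
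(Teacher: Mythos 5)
Your argument is correct, but note that the paper itself gives no proof of this statement: it is imported verbatim as Corollary~5 of \cite{SS}, quoted only to exhibit Corollary~\ref{cor:quadlhp} as its type~C analog. What you have written is therefore a genuine proof where the paper supplies none, and it is precisely the type~A shadow of the paper's own derivation of Corollary~\ref{cor:quadlhp}: specialize the \cite{SS} Ehrhart identity at $u=1$, evaluate the left side by the Lecture Hall Theorem (your observation that $L_n^{(1,2,\ldots,n)}=L_n^{(2,4,\ldots,2n)}=L_n$ is right --- in fact $L_n^{(1,2,\ldots,n)}=L_n$ by definition, so no rescaling argument is even needed), and then run the parity split of the partial sums $j+\cdots+n$. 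I checked the algebra: $\prod_{j=1}^{n}\bigl(1-q^{j+\cdots+n}\bigr)\big/\prod_{i=1}^n\bigl(1-q^{2i-1}\bigr)$ does equal $\prod_{k=1}^n[k]_{q^{2(n-k)+1}}$, since an even term count $2k$ gives exponent $k(2(n-k)+1)$ and an odd count $2k-1$ gives $(2k-1)(n-k+1)=k'(2(n-k')+1)$ under $k'=n-k+1$, so the numerator is exactly $\prod_{k=1}^n\bigl(1-q^{k(2(n-k)+1)}\bigr)$. The two inputs you flag are real but benign: the general-$\s$ form of the \cite{SS} denominator $\prod_{i=0}^{n-1}(1-u^{n-i}q^{s_{i+1}+\cdots+s_n})$ is indeed what Theorem~6 of \cite{SS} provides (the paper only displays its $\s=(2,4,\ldots,2n)$ case), and $\lhp(\pi)$ on $S_n$ is by definition the $\Theta$-transport of $\lhp^{(1,2,\ldots,n)}$, i.e.\ $-\inv(\pi)+\sum_{i\in\Des(\pi)}\bigl((i+1)+\cdots+n\bigr)$, which your first step handles correctly via $\inv(\pi)=\abs{\Theta(\pi)}$ and $\Des(\pi)=\Asc(\Theta(\pi))$. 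So the proposal stands as a correct, self-contained alternative to citing \cite{SS}.
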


\section*{Acknowlegements}

	Thanks to the Simons Foundation for a grant to the second author which supported the travel of the first author for collaboration. The first author also gratefully acknowledges support from PSC-CUNY Research Award TRADA-47-191.

\bibliographystyle{amsalpha}
\bibliography{LHAC.0617}

\end{document}